\documentclass[10pt,leqno]{amsart}
\setlength{\textwidth}{15.8cm}
\setlength{\textheight}{21.8cm}
\hoffset=-35pt
\voffset=-30pt
\usepackage{amsfonts,amssymb}

\usepackage{amsmath}
\usepackage{amsthm}
\usepackage{amsrefs}
\usepackage{qsymbols}
\usepackage{latexsym}
\usepackage{chngcntr}
\usepackage[noadjust]{cite}
\usepackage{paralist}
\usepackage{doi}
\usepackage{esint}

\newtheorem{theorem}{Theorem}[section]

\newtheorem{lemma}[theorem]{Lemma}
\newtheorem{proposition}[theorem]{Proposition}
\newtheorem{corollary}[theorem]{Corollary}
\theoremstyle{definition}

\newtheorem{remarks}[theorem]{Remarks}

\newcommand{\IR}{\mathbb{R}}

\newcommand{\IN}{\mathbb{N}}

\newcommand{\IE}{\mathbb{E}}
\newcommand{\IP}{\mathbb{P}}

\newcommand{\II}{\mathbb{I}}


\newcommand{\cO}{\mathcal{O}}

\newcommand{\cF}{\mathcal{F}}
\newcommand{\cS}{\mathcal{S}}

\newcommand{\cL}{\mathcal{L}}








\newcommand{\vn}{\vspace{.1cm}\noindent}





\newcommand{\divergence}{\operatorname{div}}

\hyphenation{Lip-schitz Ca-cciop-poli pre-fac-tor Diri-chlet non-smooth}

\setlength{\unitlength}{1cm}
\numberwithin{equation}{section} 

\title[Rigorous justification of the hydrostatic approximation]{Rigorous justification of the hydrostatic approximation for the primitive equations by scaled Navier-Stokes equations}

\author[Furukawa]{Ken Furukawa}
\address{Graduate School of Mathematical Sciences, University of Tokyo, Komaba 3-8-1, Meguro-ku, Tokyo, 153-8914, Japan }
\email{kenf@ms.u-tokyo.ac.jp}

\author[Giga]{Yoshikazu Giga} 
\address{Graduate School of Mathematical Sciences, University of Tokyo, Komaba 3-8-1, Meguro-ku, Tokyo, 153-8914, Japan }
\email{labgiga@ms.u-tokyo.ac.jp}

\author[Hieber]{Matthias Hieber} 
\address{Department of Mathematics,
	TU Darmstadt, Schlossgartenstr. 7, 64289 Darmstadt, Germany}
\email{hieber@mathematik.tu-darmstadt.de}

\author[Hussein]{Amru Hussein} 
\address{Departement of Mathematics,
	TU Darmstadt, Schlossgartenstr. 7, 64289 Darmstadt, Germany}
\email{hussein@mathematik.tu-darmstadt.de}

\author[Kashiwabara]{Takahito Kashiwabara}
\address{Graduate School of Mathematical Sciences, The University of Tokyo, 3-8-1 Komaba, Meguro, Tokyo 153-8914, Japan}
\email{tkashiwa@ms.u-tokyo.ac.jp}

\author[Wrona]{Marc Wrona}
\address{Departement of Mathematics,
	TU Darmstadt, Schlossgartenstr. 7, 64289 Darmstadt, Germany}
\email{wrona@mathematik.tu-darmstadt.de}

\subjclass[2010]{Primary: 35Q35; Secondary: 47D06, 86A05.} 
\keywords{primitive equations, scaled Navier-Stokes equations, strong convergence, convergence rate \\
	This work was partly supported by the DFG International Research Training Group IRTG 1529 and the JSPS Japanese-German Graduate Externship on Mathematical Fluid Dynamics. 
	The second author is partly supported by JSPS through grant Kiban S (No. 26220702), Kiban A (No. 17H01091), Kiban B (No. 16H03948) and the fourth and the last author are supported 
by IRTG 1529 at TU Darmstadt. The fifth author was supported by JSPS Grant-in-Aid for Young Scientists B (No.
17K14230)}

\begin{document}

\begin{abstract}
Consider the anisotropic Navier-Stokes equations as well as the primitive equations. It is shown that the horizontal velocity of the solution to the anisotropic 
Navier-Stokes equations in a cylindrical domain of height $\varepsilon $ with initial data $u_0=(v_0,w_0)\in B^{2-2/p}_{q,p}$,  $1/q+1/p\le 1$ if $q\ge 2$ and $4/3q+2/3p\le 1$ if $q\le 2$, converges  as $\varepsilon \to 0$ with convergence rate $\cO (\varepsilon )$ to the horizontal velocity 
of the solution to the primitive equations with initial data $v_0$ with respect to the maximal-$L^p$-$L^q$-regularity norm. Since the difference of the corresponding vertical velocities 
remains bounded with respect to that norm, the convergence result yields  a  rigorous justification of the hydrostatic approximation in the primitive equations in this setting.  It generalizes in particular  
a result by Li and Titi for the $L^2$-$L^2$-setting. The approach presented here does not rely on second order energy estimates but on maximal $L^p$-$L^q$-estimates for the heat equation.
\end{abstract}

\maketitle

\section{Introduction}
The primitive equations for the ocean and atmosphere are considered to be a fundamental model for geophysical flows, see e.g. the survey article \cite{Li2016}. The mathematical analysis of 
these equations has been pioneered by Lions, Teman and Wang  in their articles \cite{Lions1, LionsTemanWang1992, Lions:1993}, where they proved the existence of global, weak solutions
to the primitive equations. Their uniqueness remains an open problem until today. Global strong well-posedness of the primitive equations for initial data in $H^1$ was shown by Cao and Titi 
in \cite{CaoTiti2007} using energy methods. A different approach, based on the theory of  evolution equations, was introduced by Hieber and Kashiwabara in \cite{Hieber2016} and 
subsequent works \cite{HieberKashiwabaraHussein2016, GigaGriesHusseinHieberKashiwabara2016, NeumannNeumann, DirichletNeumann}. 

It is the aim of this paper to show that the primitive equations can be obtained as the limit of anisotropically scaled  Navier-Stokes equations. The  scaling parameter $\varepsilon>0$ represents
the ratio of the depth to the horizontal width. Such an approximation is motivated by the fact that for large-scale oceanic dynamics, this aspect ratio $\varepsilon$ is rather small and 
implies anisotropic viscosity coefficients 
(see e.g. \cite{Pedlosky1979}). For an aspect ratio $\varepsilon $, i.e., in the case where the spacial domain can be represented as $\Omega _\varepsilon =G\times (-\varepsilon , +\varepsilon) $ for 
some $G\subset \IR ^2$, and a horizontal and vertical eddy viscosity $1$ and $\varepsilon ^2$, respectively, the system can be rescaled into the form
\begin{equation}
\left \{\begin{array}{rll}
\partial _tv_\varepsilon+u_\varepsilon\cdot\nabla v_\varepsilon-\Delta v_\varepsilon+\nabla _Hp_\varepsilon= & 0,\\
\varepsilon (\partial _tw_\varepsilon+u_\varepsilon\cdot \nabla w_\varepsilon-\Delta w_\varepsilon )+\tfrac{1}{\varepsilon }\partial _zp_\varepsilon= & 0,\\
\divergence u_\varepsilon =&  0.
\end{array}\right .
\end{equation}
in the time-space domain $(0,T)\times \Omega _1$, which  is \emph{independent} of the aspect ratio. We refer  to \cite{LiTiti2017} for more details on this rescaling procedure.
Here the horizontal and vertical velocities $v_\varepsilon $ and $w_\varepsilon$ describe the three-dimensional velocity $u_\varepsilon$, while $p_\varepsilon$ denotes the pressure of the fluid. 
Here  $\partial _z$ denotes the vertical-derivative, $\nabla _H$ and $\divergence _H$ the horizontal gradient and divergence, whereas $\divergence $, $\nabla$
and $\Delta$ stand for the usual three-dimensional spatial divergence, gradient, and Laplacian.

First convergence results for the above system in the steady state case go back to Besson and Laidy  \cite{BessonLaydi1992}. The convergence of the above system has been studied first 
by  Az\' erad and Guill\' en in \cite{ArezadGuillen2001} in the setting of  weak convergence, no uniform convergence rate was given. 

Recently, Li and Titi \cite{LiTiti2017} investigated the strong convergence of the above system within  the $L^2$-$L^2$-setting for horizontal initial velocities belonging to $H^1$ and $H^2$. 
In addition, they showed a convergence rate of order $\mathcal{O}(\varepsilon)$. 

It is the aim of this paper to show convergence results of the above system  in the strong sense within the $L^p$-$L^q$-setting.  Our method is very different from the one introduced by 
\cite{LiTiti2017}, whereas they rely on second order energy estimates, our approach is based on maximal $L^p$-$L^q$-regularity estimates for the heat equation. This allows us to give a very short proof of the 
convergence result in the more general $L^p$-$L^q$-setting, which even in the $L^2$-$L^2$-setting allows for a   slightly larger class of initial data compared to the one introduced   
by Li and Titi in \cite{LiTiti2017} by using energy estimates.

\section{Preliminaries}
Consider the cylindrical domain $\Omega :=(0,1)^2\times (-1,1)$. Let $u=(v,w)$ be the solution of the primitive equations
\begin{equation}\label{eq_PE}
\left \{\begin{array}{rll}
\partial _tv+u\cdot\nabla v-\Delta v+\nabla _Hp=& \ 0&\text{ in }(0,T)\times\Omega ,\\
\partial _zp=&\ 0&\text{ in }(0,T)\times\Omega ,\\
\divergence u=& \ 0&\text{ in }(0,T)\times\Omega ,\\
p\text{ periodic in }x,y& 
\\
v ,w \text{ periodic in }x,y,z,&\text{ even and odd }&\text{ in }z,\\
u(0)=&\ u_0&\text{ in }\Omega ,
\end{array}\right .\tag{PE}
\end{equation}
and $u_\varepsilon =(v_\varepsilon ,w_\varepsilon )$ be the solution of the anisotropic Navier-Stokes equations
\begin{equation}\label{NSe}
\left \{\begin{array}{rll}
\partial _tv_\varepsilon+u_\varepsilon\cdot\nabla v_\varepsilon-\Delta v_\varepsilon+\nabla _Hp_\varepsilon=& \ 0&\text{ in }(0,T)\times\Omega ,\\
\partial _tw_\varepsilon+u_\varepsilon\cdot \nabla w_\varepsilon-\Delta w_\varepsilon+\tfrac{1}{\varepsilon ^2}\partial _zp_\varepsilon=&\ 0&\text{ in }(0,T)\times\Omega ,\\
\divergence u_\varepsilon =& \ 0&\text{ in }(0,T)\times\Omega ,\\
p_\varepsilon\text{ periodic in }x,y,z,&\text{ even }&\text{ in }z,\\
v _\varepsilon,w _\varepsilon\text{ periodic in }x,y,z,&\text{ even and odd }&\text{ in }z,\\
u_\varepsilon (0)=&\ u_0&\text{ in }\Omega .
\end{array}\right .\tag{NS$_\varepsilon$}
\end{equation}
Here $v$ and $v_\varepsilon $ denote the (two-dimensional) horizontal velocities, $w$ and $w_\varepsilon $  the vertical velocities, and $p$ and $p_\varepsilon $ denote the pressure term for 
the primitive equations as well as the Navier-Stokes equations, respectively. These are functions of three space variables $x,y\in (0,1)$, $z\in (-1,1)$. The vertical periodicity and parity conditions 
correspond to an equivalent set of equations with vertical Neumann boundary conditions for the horizontal velocity and vertical Dirichlet boundary conditions for the vertical velocity (cf. e.g. \cite{CaoLiTiti2015}). Since $w$ is odd, the divergence free condition 
for the primitive equation translates into 
$\divergence _H\overline{v} =0$, where $\overline{v}(x,y)=\frac{1}{2}\int^1_{-1}v(x,y,z)\mathop{}\!\mathrm{d}z$, and 
\[w(\cdot,\cdot,z)= -\int^z_{-1}\divergence _H v(\cdot,\cdot,\zeta)\mathop{}\!\mathrm{d}\zeta.
\]

For $p,q\in (1,\infty )$ and $s\in [0,\infty )$ we define the Bessel potential and Besov spaces
\[H^{s,p}_{per}(\Omega )=\overline{C_{per}^\infty (\overline{\Omega })}^{\Vert \cdot\Vert _{H^{s,p}}} \quad \hbox{and} \quad B^{s}_{p,q,per}(\Omega )=\overline{C_{per}^\infty (\overline{\Omega })}^{\Vert \cdot\Vert _{B^{s}_{p,q}}},
\]
where
$C_{per}^\infty (\overline{\Omega })$ denotes the space of smooth functions that are periodic of any order (cf. \cite[Section 2]{Hieber2016}) in all three directions 
on $\partial \Omega$. The space $H^{s,p}(\Omega )$ denotes the Bessel potential space of order $s$, with norm $\Vert \cdot \Vert _{H^{s,p}}$ defined via the restriction of the corresponding space defined on the whole space to $\Omega$ (cf. \cite[Definition 3.2.2.]{Triebel}). Moreover, $B^{s}_{p,q}(\Omega )$ denotes a Besov space on $\Omega$, which is defined by restrictions of functions on the whole space 
to $\Omega$, see e.g. \cite[Definition 3.2.2.]{Triebel}. Note that $L^p(\Omega )=H^{0,p}_{per}(\Omega )$ and $B^{s}_{p,2,per}(\Omega ) = H^{s}_{p,per}(\Omega )$. The anisotropic structure of the primitive equations motivates the definition of the Bessel potential spaces $H^{s,p}_{xy}:=H^{s,p}((0,1)^2)$ and $H^{s,p}_{z}:=H^{s,p}(-1,1)$ for the horizontal and vertical variables, respectively. Similarly  as above we write 
$L^p_{xy} :=H^{0,p}_{xy}$ and $L^p_z:=H^{0,p}_{z}$ and set $H^{s,p}_{xy}H^{r,q}_{z}:=H^{s,p}((0,1)^2;H^{r,q}_{z})$.

The divergence free conditions in the above sets of equations can be encoded into the space of solenoidal functions
\[
L^p_\sigma (\Omega )=\overline{\{u\in C_{per}^\infty (\overline{\Omega })^3:\divergence u =0\}}^{\Vert \cdot\Vert _{L^p}}\quad \mbox{and} \quad 
L^p_{\overline{\sigma }} (\Omega )=\overline{\{v\in C_{per}^\infty (\overline{\Omega })^2:\divergence _H\overline{v} =0\}}^{\Vert \cdot\Vert _{L^p}}.
\]

For given $p,q\in (1,\infty)$ we set
\begin{align*}
X_0 &:=L^q(\Omega ), \quad X_1:=H^{2,q}_{per}(\Omega ),\\
X^v_0&:=\{ v\in L^q_{\overline{\sigma}}(\Omega ):v\text{ even in }z\}, \quad X^v_1:=\{ v\in H^{2,q}_{per}(\Omega )^2 \cap L^q_{\overline{\sigma}}(\Omega ) :v\text{ even in }z\},\\
X_0^u&:=\{(v_1,v_2,w)\in L^q_\sigma (\Omega ):v_1,v_2\text{ even }w\text{ odd in }z\}, \\
X_1^u&:=\{(v_1,v_2,w)\in H^{2,q}_{per}(\Omega)^3\cap L^q_\sigma (\Omega ):v_1,v_2\text{ even }w\text{ odd in }z\},
\end{align*}
and consider the trace space $X_\gamma$ defined by
\begin{align*}
X_\gamma &=(X_0^u,X_1^u)_{1-1/p,p}.
\end{align*}
Here  $(\cdot,\cdot)_{1-1/p,p}$ denotes the real interpolation functor.

Following the lines of  \cite[Section 4]{HieberKashiwabaraHussein2016} and \cite{GigaGriesHieberHusseinKashiwabara2017} the trace space $X_\gamma$ 
can be characterized as follows.

\vn
\begin{lemma}[Characterization of the trace space]
Let $p,q\in (1,\infty )$. Then 
\[
X_\gamma =\left \{ \begin{array}{lll}
\{(v_1,v_2,w)\in B^{2-2/p}_{q,p,per}(\Omega )^3\cap L^q_\sigma (\Omega ):&v=(v_1,v_2)\text{ even, }w\text{ odd in }z,&\\
& (\partial _zv, w)=0\text{ at }z=-1,0,1\} ,&1>\frac{2}{p}+\frac{1}{q},\\
\{(v_1,v_2,w)\in B^{2-2/p}_{q,p,per}(\Omega )^3\cap L^q_\sigma (\Omega ):&v=(v_1,v_2)\text{ even, }w\text{ odd in }z,&\\
& w=0\text{ at }z=-1,0,1\} ,& 1<\frac{2}{p}+\frac{1}{q}.
\end{array} \right .
\]
\end{lemma}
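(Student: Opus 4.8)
The plan is to follow the pattern used for the analogous trace spaces in \cite[Section~4]{HieberKashiwabaraHussein2016} and \cite{GigaGriesHieberHusseinKashiwabara2017}: first identify the real interpolation space between the \emph{unconstrained} spaces $L^q(\Omega)^3$ and $H^{2,q}_{per}(\Omega)^3$, then cut this down to the divergence-free, parity-constrained subspaces by a single bounded projection, and finally read off which boundary conditions at $z\in\{-1,0,1\}$ survive in the resulting Besov scale.

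First I would record the standard identification of the periodic Besov scale as a real interpolation space, namely
\[
\big(L^q(\Omega)^3,\,H^{2,q}_{per}(\Omega)^3\big)_{1-1/p,\,p}=B^{2-2/p}_{q,p,per}(\Omega)^3 ,
\]
which holds because $H^{2,q}_{per}(\Omega)$ is, up to a shift, the domain of the periodic Laplacian on $L^q(\Omega)$. Next I would reduce to the constrained spaces. Let $R$ denote the reflection $(Rf)(x,y,z)=(f_1,f_2,-f_3)(x,y,-z)$ and let $\mathbb{P}_q$ be the Helmholtz--Leray projection on $L^q(\Omega)^3$ associated with the periodic boundary conditions. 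Both are bounded on $L^q(\Omega)^3$, leave $H^{2,q}_{per}(\Omega)^3$ invariant, and commute (the relation $\mathbb{P}_qR=R\mathbb{P}_q$ is immediate from the Fourier-multiplier representation of $\mathbb{P}_q$ on the underlying torus, with $R$ acting on the frequency $(k_1,k_2,k_3)$ by $k_3\mapsto-k_3$ together with the corresponding vector reflection). Hence $P:=\mathbb{P}_q\circ\tfrac12(\Id+R)$ is a bounded projection on $L^q(\Omega)^3$ that leaves $H^{2,q}_{per}(\Omega)^3$ invariant and satisfies $P(L^q(\Omega)^3)=X_0^u$ and $P(H^{2,q}_{per}(\Omega)^3)=X_1^u$. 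Since real interpolation commutes with the range of a projection that is bounded on both endpoint spaces, this yields
\[
X_\gamma=P\big[B^{2-2/p}_{q,p,per}(\Omega)^3\big]=B^{2-2/p}_{q,p,per}(\Omega)^3\cap L^q_\sigma(\Omega)\cap\{\,v\text{ even},\ w\text{ odd in }z\,\}.
\]

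Finally I would extract the boundary conditions at the fixed points $z\in\{-1,0,1\}$ of $z\mapsto-z$ on the period-$2$ torus. If $(v,w)$ lies in the space just displayed, then $\partial_z v$ and $w$ are odd in $z$, and the trace onto a horizontal slice $\{z=z_0\}$ is bounded on $B^s_{q,p}$ precisely for $s>1/q$; since $\partial_z v\in B^{1-2/p}_{q,p}$ and $w\in B^{2-2/p}_{q,p}$, the condition $\partial_z v|_{z=z_0}=0$ forced by oddness is present exactly when $1-2/p>1/q$, i.e.\ $1>\tfrac2p+\tfrac1q$, whereas $w|_{z=z_0}=0$ appears as soon as the trace of $w$ is meaningful, i.e.\ $2-2/p>1/q$ (in particular whenever $1>\tfrac2p+\tfrac1q$). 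The inclusion ``$\subseteq$'' then follows because $X_1^u$ is dense in $X_\gamma$ and the relevant trace maps vanish on $X_1^u\subset H^{2,q}_{per}(\Omega)^3$, while ``$\supseteq$'' follows because any $(v,w)$ in the right-hand side is fixed by $P$ and hence lies in $P[B^{2-2/p}_{q,p,per}(\Omega)^3]=X_\gamma$. This gives the two cases of the lemma.

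The step I expect to be the main obstacle is the reduction to the constrained spaces: one has to check carefully that $\mathbb{P}_q$ is bounded on the \emph{second-order} periodic scale $H^{2,q}_{per}(\Omega)^3$ and genuinely commutes with $R$, so that $P$ is admissible for the interpolation couple and the identifications $P(L^q(\Omega)^3)=X_0^u$, $P(H^{2,q}_{per}(\Omega)^3)=X_1^u$ hold with the correct boundary behaviour; this, together with the borderline-regularity analysis above, is exactly what produces the dichotomy at $\tfrac2p+\tfrac1q=1$. The mapping and commutation properties of $\mathbb{P}_q$ on the periodic cylinder are, however, essentially available from \cite{Hieber2016,HieberKashiwabaraHussein2016}, so the remaining work is the bookkeeping of trace conditions.
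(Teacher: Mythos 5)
Your argument is correct and is essentially the route the paper takes: the paper gives no proof of this lemma but defers to \cite[Section 4]{HieberKashiwabaraHussein2016} and \cite{GigaGriesHieberHusseinKashiwabara2017}, where the characterization rests on exactly the scheme you describe --- interpolate the unconstrained periodic couple to get $B^{2-2/p}_{q,p,per}(\Omega)^3$, cut down by the commuting parity and Helmholtz projections using that real interpolation respects complemented subspaces, and use the trace threshold $s>1/q$ to decide which of the conditions $\partial_z v=0$, $w=0$ at $z=-1,0,1$ are visible (and then automatic from the parity and periodicity). The only point worth flagging is cosmetic: in the regime $1<\tfrac2p+\tfrac1q$ with $2-\tfrac2p\le\tfrac1q$ the condition $w=0$ in the statement must be read as vacuous, which your remark about when traces are meaningful already accounts for.
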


For $p,q\in (1,\infty)$ and $T\in (0,\infty ]$ we also define the maximal regularity spaces 
\[
\IE _0(T):=L^p(0,T;X_0),\quad \IE _1(T):=L^p(0,T;X_1)\cap H^{1,p}(0,T;X_0),
\]
and analogously $\IE _0^v(T)$, $\IE _1^v(T)$ and $\IE _0^u(T)$, $\IE _1^u(T)$ with respect to $X_0^v$, $X_1^v$ and $X_0^u$, $X_1^u$, respectively. In order to simplify our notation we sometimes omit the 
subscripts $u$ and $v$ and write only $\IE _0(T)$ and $\IE _1(T)$. 

Finally, we say that $u=(v,w)$ is a \emph{strong solution to the primitive equations} (in the $L^p$-$L^q$-setting), if $v\in \IE _1^v$ and  (\ref{eq_PE}) holds almost everywhere. We say that 
$u_\varepsilon$ is a \emph{strong solution to the Navier-Stokes equations}, if $u\in \IE _1^u$ and  (\ref{NSe}) holds almost everywhere.

\section{Main Result}

Roughly speaking, the idea of our approach consists of  controling the maximal regularity norm of the differences $(v_\varepsilon -v,\varepsilon (w_\varepsilon -w))$ by 
the aspect ratio $\varepsilon$. To this end, we introduce the difference equations of (\ref{NSe}) and (\ref{eq_PE}): setting  $V_\varepsilon:=v_\varepsilon-v$, $W_\varepsilon:=w_\varepsilon-w$, 
$U_\varepsilon:=(V_\varepsilon,W_\varepsilon)$ and $P_\varepsilon =p_\varepsilon -p$, we obtain 
\begin{equation}\label{eq_diff-eq}
\left \{\begin{array}{rll}
\partial _tV_\varepsilon-\Delta V_\varepsilon=& \ F_H(V_\varepsilon ,W_\varepsilon )-\nabla _HP_\varepsilon&\text{ in }(0,T)\times\Omega ,\\
\partial _t\varepsilon W_\varepsilon -\Delta \varepsilon W_\varepsilon=&\ \varepsilon F_z(V_\varepsilon ,W_\varepsilon )-\tfrac{1}{\varepsilon }\partial _zP_\varepsilon&\text{ in }(0,T)\times\Omega ,\\
\divergence U_\varepsilon =& \ 0&\text{ in }(0,T)\times\Omega ,\\
P_\varepsilon\text{ periodic in }x,y,z,& \text{ even }&\text{ in }z,  \\
V_\varepsilon ,W_\varepsilon \text{ periodic in }x,y,z,&\text{ even and odd }&\text{ in }z,\\
U_\varepsilon (0)=&\ 0&\text{ in }\Omega ,
\end{array}\right .
\end{equation}
where the forcing terms $F_H$ and $F_z$ are given by
\begin{align*}
F_H(V_\varepsilon,W_\varepsilon):=&\ -U_\varepsilon \cdot \nabla v-u\cdot \nabla V_\varepsilon -U_\varepsilon \cdot\nabla V_\varepsilon ,&\ \\
F_z(V_\varepsilon ,W_\varepsilon ):=&\ -U_\varepsilon \cdot \nabla w-u\cdot \nabla W_\varepsilon -U_\varepsilon \cdot \nabla W_\varepsilon -\partial _tw-u\cdot\nabla w+\Delta w.
\end{align*}
Applying  the maximal regularity estimate given in Proposition~\ref{prop_mr_estimate} to \eqref{eq_diff-eq}, we are able to estimate $\|U_\varepsilon \|_{\IE_1}$ in terms of the right hand sides. 
The latter  will be estimated in a series of lemmas in Section~\ref{se_nonlinear}. Like this we obtain a quadratic inequality for the norm of the differences 
(cf. Corollary~\ref{coro_q_ineq}) and we need to ensure that the constant term as well as the coefficient in front of the linear term are sufficiently small. This can be achieved provided 
the aspect ratio $\varepsilon $ is small enough and provided  the vertical and horizontal solution of the primitive equations exist globally in the maximal regularity class (cf. Proposition~\ref{prop_w_in_E1}).

\vspace{.1cm}\noindent
{\bf Assumption (A)}. Let $q\in \left (\frac{4}{3},\infty \right )$ and $p\ge \max \left \{ \frac{q}{q-1};\frac{2q}{3q-4}\right \}$, i.e., 
$1\ge\left \{\begin{array}{rl}
\frac{1}{p}+\frac{1}{q},&\!\text{if }q\ge 2,\\
\frac{2}{3p}+\frac{4}{3q},&\!\text{if }q\le 2.
\end{array}
\right .
$

\noindent
We are now in the position to state our main result.

\vn
\begin{theorem}[Main Theorem]\label{mthm_MR}
Assume that  $p,q $ fulfill Assumption (A) and let $u_0\in X_\gamma $, $T>0$ and $(v,w)$ and  $(v_\varepsilon ,w_\varepsilon )$ be solutions of (\ref{eq_PE}) and (\ref{NSe}), respectively. 
Then there exists a constant $C>0$, independent of $\varepsilon$, such that  
for $\varepsilon $ sufficiently small it holds$$
\Vert (V_\varepsilon ,\varepsilon W_\varepsilon)\Vert _{\IE _1(T)}\le C\varepsilon. 
$$
In particular
\[(v_\varepsilon ,\varepsilon w_\varepsilon )\to (v,0)\text{ in }L^p(0,T;H^{2,q}(\Omega ))\cap H^{1,p}(0,T;L^q(\Omega ))
\]
as $\varepsilon \to 0$ with convergence rate $\cO (\varepsilon )$. 
\end{theorem}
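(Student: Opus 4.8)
The plan is to reduce the whole statement to an estimate on the difference system \eqref{eq_diff-eq} via maximal $L^p$-$L^q$-regularity, and then to close a nonlinear bootstrap in the norm of $\IE_1(T)$. The key structural observation is that after the anisotropic rescaling $(V_\varepsilon,\varepsilon W_\varepsilon)$, the pressure-gradient term on the right-hand side of \eqref{eq_diff-eq} is a gradient, and hence is annihilated by the Helmholtz (Leray) projection; what remains is a forced heat equation for which the Da Prato--Grisvard / Weis type maximal regularity for the Stokes-type operator on the solenoidal spaces $X_0^u$ applies. Applying Proposition~\ref{prop_mr_estimate} to \eqref{eq_diff-eq} with zero initial data yields
\[
\|(V_\varepsilon,\varepsilon W_\varepsilon)\|_{\IE_1(T)} \le C\,\bigl\|\bigl(F_H(V_\varepsilon,W_\varepsilon),\,\varepsilon F_z(V_\varepsilon,W_\varepsilon)\bigr)\bigr\|_{\IE_0(T)},
\]
with $C$ independent of $\varepsilon$ and $T$ (this $\varepsilon$-uniformity is exactly why the scaled unknown $\varepsilon W_\varepsilon$ rather than $W_\varepsilon$ must be used).

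The heart of the argument is then to estimate the right-hand side. I would split $F_H$ and $F_z$ into three groups: (i) the terms linear in $U_\varepsilon=(V_\varepsilon,W_\varepsilon)$ with the fixed primitive-equation solution $u=(v,w)$ as a coefficient, e.g. $U_\varepsilon\cdot\nabla v$ and $u\cdot\nabla V_\varepsilon$; (ii) the quadratic term $U_\varepsilon\cdot\nabla V_\varepsilon$ (resp.\ $\cdot\nabla W_\varepsilon$); and (iii) the inhomogeneous remainder $-\partial_t w - u\cdot\nabla w + \Delta w$ appearing only in $\varepsilon F_z$. For (iii) the crucial point is that this remainder is bounded in $\IE_0(T)$ — this is precisely the content of the fact that $w$ lies in $\IE_1$ (Proposition~\ref{prop_w_in_E1}), which requires the restriction on $p,q$ in Assumption (A); multiplying by $\varepsilon$ makes this the $\cO(\varepsilon)$ constant term in the quadratic inequality. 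For (i) and (ii) I would use, in the order: the mixed-norm Hölder inequality in space, Sobolev embeddings $\IE_1(T)\hookrightarrow L^p(0,T;H^{1,r})$ and $\IE_1(T)\hookrightarrow C([0,T];X_\gamma)$ together with the anisotropic embeddings $H^{s,p}_{xy}H^{r,q}_z$, and interpolation to distribute derivatives; the coefficient terms $\|u\|$, $\|v\|$, $\|w\|$ are controlled by the global maximal regularity bound for the primitive equations. The outcome, stated in Corollary~\ref{coro_q_ineq}, is an inequality of the form $N_\varepsilon \le a_\varepsilon + b\,N_\varepsilon + c\,N_\varepsilon^2$ with $N_\varepsilon := \|(V_\varepsilon,\varepsilon W_\varepsilon)\|_{\IE_1(T)}$, where $a_\varepsilon = \cO(\varepsilon)$ and $b$ can be made $<1/2$.

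To finish, I would make $b<1/2$ small. There are two mechanisms: the coefficient terms involving $u$ acquire a small factor either by taking $T$ small (short-time smallness of the $\IE_1$ norm on the remaining interval) combined with a continuation/bootstrap over finitely many subintervals, or — more in the spirit of the paper — by exploiting that the genuinely dangerous pieces carry an explicit power of $\varepsilon$ after the rescaling (the vertical-velocity contributions $W_\varepsilon = \varepsilon^{-1}(\varepsilon W_\varepsilon)$ always appear paired with a compensating $\varepsilon$ in $F_z$, and the mixed horizontal/vertical terms inherit an $\varepsilon$). Once $b<1/2$, a standard continuity argument on the maximal existence interval, using that $N_\varepsilon$ depends continuously on $T$ and $N_\varepsilon\to 0$ as $T\to 0$, rules out the large root of the quadratic and forces $N_\varepsilon \le 2a_\varepsilon = \cO(\varepsilon)$ on $[0,T]$. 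This gives $\|(V_\varepsilon,\varepsilon W_\varepsilon)\|_{\IE_1(T)}\le C\varepsilon$; the stated convergence $(v_\varepsilon,\varepsilon w_\varepsilon)\to(v,0)$ in $\IE_1(T)$ is then immediate, since $\|(v_\varepsilon-v,\varepsilon w_\varepsilon-0)\|_{\IE_1(T)} = \|(V_\varepsilon,\varepsilon W_\varepsilon)\|_{\IE_1(T)}$.

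\textbf{Main obstacle.} I expect the delicate part to be the $\IE_0(T)$-estimates of the bilinear and coefficient terms in (i)--(ii), specifically tracking which factor receives how many derivatives so that each product lands in $L^p(0,T;L^q)$ with a constant that is uniform in $\varepsilon$ and carries either a small power of $\varepsilon$ or a small power of $T$. The anisotropy (horizontal derivatives behave differently from $\partial_z$, and $w$ is one derivative worse than $v$) means the naive Sobolev--Hölder bookkeeping is tight and is exactly where Assumption (A) on $p,q$ enters; getting $b<1/2$ rather than merely $b<\infty$, uniformly in $\varepsilon$, is the crux.
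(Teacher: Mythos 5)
Your proposal follows essentially the same route as the paper: difference system \eqref{eq_diff-eq}, $\varepsilon$-uniform maximal regularity via the scaled Helmholtz projection (Proposition~\ref{prop_mr_estimate}), bilinear $\IE_0$-estimates leading to the quadratic inequality of Corollary~\ref{coro_q_ineq}, with the $\cO(\varepsilon)$ inhomogeneity coming from $w\in\IE_1$ (Proposition~\ref{prop_w_in_E1}, which is where Assumption (A) enters), and a continuity/bootstrap argument to solve the inequality. Three points deserve correction, though none is fatal because you also state the viable alternative. First, your ``second mechanism'' for making the linear coefficient small --- that the dangerous pieces carry compensating powers of $\varepsilon$ after rescaling --- is not what the paper does and does not work: the terms $u\cdot\nabla V_\varepsilon$, $U_\varepsilon\cdot\nabla v$ and in particular $W_\varepsilon\partial_z v$ carry no $\varepsilon$; the paper controls all $W_\varepsilon$-contributions through the divergence-free relation $W_\varepsilon=-\int_{-1}^z\divergence_H V_\varepsilon$ (Lemma~\ref{le_rhs_e0_diffi}) and gains smallness \emph{only} by subdividing $[0,\mathcal T]$ so that the increment of $\Vert u\Vert_{\IE_1}^p$ on each piece is at most $(2C\mathcal T^\eta)^{-p}$; so only your first mechanism closes the argument. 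Second, on the later subintervals the difference problem has nonzero initial data, which must be fed back through the embedding $\IE_1\hookrightarrow L^\infty(0,T;X_\gamma)$ into the constant term of the quadratic inequality; the resulting bound is $\varepsilon\,2K_m$ with constants $K_m$ compounding over the finitely many steps (and one also needs the remark that the a priori bound excludes blow-up of the Navier--Stokes solution before $\mathcal T$), not literally $2a_\varepsilon$ on all of $[0,T]$ from a single continuity argument. Third, a triviality: $\varepsilon W_\varepsilon=\varepsilon w_\varepsilon-\varepsilon w$, so the stated convergence of $\varepsilon w_\varepsilon$ to $0$ additionally uses $\varepsilon\Vert w\Vert_{\IE_1(T)}\to 0$, again from Proposition~\ref{prop_w_in_E1}, rather than the identity you wrote.
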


\vn
\begin{remarks} 
a) If the solution  $u=(v,w)$ of the primitive equations exists globally in time, the convergence rate is uniform for all $T \in (0,\infty]$, see  
Remark~\ref{remunif} b). For example, if $p=q=2$ and the initial data are  mean value free, one can show that the solution to the primitive equations exists globally in $\IE _1(T)$ with $T=\infty$. \\
b) We note that the case $p=q=2$, investigated  before in \cite{LiTiti2017}, is covered by our result. More specifically, they assumed $v_0\in H^2$ whereas for our purposes $v_0,\divergence_H v_0\in H^1$ suffices. \\
c) The scaled Navier-Stokes equations are locally well-posed in the maximal regularity spaces on the torus and the parity conditions are preserved. Our main result, Theorem~\ref{mthm_MR}, yields that
for each time $T$ there exists an $\varepsilon>0$ such that the solution exists on $(0,T)$.\\ 
d) The primitive equations are well-posed for all times $T>0$ provided  $u_0 \in X_\gamma$, see  \cite{GigaGriesHieberHusseinKashiwabara2017}.	\\
e) Our method can be adjusted to the case with perturbed initial data. That is, given initial data $(u_{0,\varepsilon})_{\varepsilon>0}\subset X_\gamma $ converging to $u_0$ in $X_\gamma $ as $\varepsilon \to 0$ of order $\cO (\delta _\varepsilon )$ for some null-sequence $(\delta _\varepsilon )_{\varepsilon>0}$, then Theorem~\ref{mthm_MR} holds with $(v_\varepsilon ,w_\varepsilon )$ replaced by the solution of (\ref{NSe}) with initial data $v_{0,\varepsilon}$. In that case the maximal regularity norm of the differences is bounded by $C\max\{\varepsilon,\delta_\varepsilon \}$ and consequently the convergence rate is of order $\cO(\max\{\varepsilon,\delta_\varepsilon \})$.\\
\end{remarks}

\section{Proof of the Main Theorem}\label{se_nonlinear}

The proof of Theorem~\ref{mthm_MR} relies upon estimates on the terms $F_H$ and $F_z$ in equations (\ref{eq_diff-eq}) within the $L^p$-$L^q$-framework. These estimates imply eventually a quadratic 
inequality for the difference of the velocities, see Corollary~\ref{coro_q_ineq}. In order to establish these estimates we need to ensure that the solution of the primitive equations belongs to the 
maximal regularity class (see Proposition~\ref{prop_w_in_E1}) and  that the nonlinear terms can be estimated in $\IE_0(T)$, see Lemma~\ref{le_rhs_e0_easy} and \ref{le_rhs_e0_diffi}.
We hence subdivide our proof in three steps. Throughout this section let $T<\infty$.

\subsection{Nonlinear estimates}
In this subsection, we estimate the  bilinear terms and keep track of the $T$-dependence of the norms involved.    

\vn
\begin{lemma}\label{le_emb}
Let $s\ge s^\prime \ge 0$ and $p\in [1,\infty )$. Then $H^{s,p}(0,T)\overset{s-s^\prime }{\hookrightarrow }H^{s^\prime ,p}(0,T)$ where $\overset{\eta }{\hookrightarrow }$ stands for an embedding with 
embedding constant $CT^\eta$, $C>0$ independent of $T$.
\end{lemma}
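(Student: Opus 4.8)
The plan is to establish the quantitative embedding
$\|f\|_{H^{s',p}(0,T)}\le C\,T^{\,s-s'}\|f\|_{H^{s,p}(0,T)}$, with $C=C(s,s',p)$,
by a scaling argument that transports everything onto the fixed reference interval $(0,1)$. Given $f$ on $(0,T)$ I introduce $g(\tau):=f(T\tau)$ for $\tau\in(0,1)$, so $f(t)=g(t/T)$; the task then splits into (i) relating $\|\,\cdot\,\|_{H^{\sigma,p}(0,T)}$ to $\|\,\cdot\,\|_{H^{\sigma,p}(0,1)}$ under this substitution and collecting the resulting powers of $T$, and (ii) invoking the plain embedding $H^{s,p}(0,1)\hookrightarrow H^{s',p}(0,1)$ on the fixed interval, which costs only a $T$-independent constant.

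I would first settle integer orders $s=m\ge n=s'$. The substitution acts explicitly there: $f^{(j)}(t)=T^{-j}g^{(j)}(t/T)$ together with $\|h(\cdot/T)\|_{L^p(0,T)}=T^{1/p}\|h\|_{L^p(0,1)}$ yields $\|f^{(j)}\|_{L^p(0,T)}=T^{1/p-j}\|g^{(j)}\|_{L^p(0,1)}$ for every $j$. Comparing $\|f\|_{W^{n,p}(0,T)}=\sum_{j\le n}T^{1/p-j}\|g^{(j)}\|_{L^p(0,1)}$ termwise with $\|f\|_{W^{m,p}(0,T)}$ gives the estimate with $C=1$ as soon as $T\ge1$, since then $T^{m-n}\ge1$. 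For $T\le1$ the top term $T^{1/p-n}\|g^{(n)}\|_{L^p(0,1)}$ dominates in $\|f\|_{W^{n,p}(0,T)}$, so one first bounds $\|g\|_{W^{n,p}(0,1)}$ by $\|g^{(m)}\|_{L^p(0,1)}$ by repeated application of Poincaré's inequality on $(0,1)$; it is precisely here that one uses that $g$, hence $f$, vanishes at the left endpoint to the order dictated by $m$ — which is the situation of all the applications in Section~\ref{se_nonlinear}, the functions involved there (such as the difference $U_\varepsilon$) having vanishing initial trace. Rescaling back via $\|g^{(m)}\|_{L^p(0,1)}=T^{m-1/p}\|f^{(m)}\|_{L^p(0,T)}$, the powers of $T$ combine to give $C\,T^{m-n}\|f\|_{W^{m,p}(0,T)}$.

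The non-integer case I would obtain by interpolation: $H^{s,p}(0,T)$ lies in the complex interpolation scale, so the integer-order estimates interpolate, and since the interpolation parameter enters the $T$-exponent linearly, the exponent of the interpolated estimate is automatically $s-s'$ (this is conveniently done in two steps, passing through an integer-order space). I expect this to be the main obstacle: the Bessel potential norm is not homogeneous under scaling — the symbol $(1+|\xi|^2)^{s/2}$ does not transform like $\lambda^s(1+|\xi|^2)^{s/2}$ under $\xi\mapsto\lambda\xi$ — so the clean integer-order rescaling does not extend directly, and one must also be sure the interpolation identities for $H^{s,p}(0,T)$ hold with $T$-uniform constants. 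Equivalently one can argue directly on $\IR$ after a ($T$-uniform, for trace-zero data) extension by zero, estimating the dilation $h\mapsto h(\lambda\,\cdot)$ on $H^{\sigma,p}(\IR)$ with the expected power of $\lambda$ via Mikhlin's theorem. The vanishing at $t=0$ for $T\le1$ must not be dropped along the way: without it the estimate fails already for constant functions, so it is genuinely an estimate about trace-zero functions.
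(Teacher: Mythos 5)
Your route is genuinely different from the paper's, which is far more elementary: no rescaling to $(0,1)$, no Poincar\'e inequality, no interpolation. The paper sets $m=\lfloor p(s-s')\rfloor$ and $1/r=m+1/p+s'-s\in[1/p,1)$ and runs the chain $H^{s,p}\hookrightarrow H^{m+s',r}\hookrightarrow H^{m+s',1}\hookrightarrow H^{m-1+s',\infty}\hookrightarrow H^{m-1+s',1}\hookrightarrow\cdots\hookrightarrow H^{s',\infty}\hookrightarrow H^{s',p}$ on $(0,T)$: the arrows that only lower the integrability exponent are H\"older's inequality in time and contribute the factors $T^{1-1/r}$, $T^{1}$ (repeatedly) and $T^{1/p}$, whose exponents sum exactly to $s-s'$, while the remaining arrows are Sobolev embeddings treated as having $T$-independent constants. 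All the $T$-dependence is thus harvested from H\"older, and the fractional case is handled on the same footing as the integer one.

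The main problem with your proposal is that it does not prove the statement as given: you insert an additional hypothesis (vanishing trace at $t=0$, in fact vanishing to the order dictated by $m$ for the iterated Poincar\'e step) and justify it by asserting that the applications in Section~\ref{se_nonlinear} only involve trace-zero functions. That is not so: Lemmas~\ref{le_rhs_e0_easy} and~\ref{le_rhs_e0_diffi} are stated for arbitrary $v_1,v_2\in\IE_1(T)$ and are applied in Corollary~\ref{coro_q_ineq} and Proposition~\ref{prop_w_in_E1} with factors equal to components of the solution $(v,w)$ of \eqref{eq_PE}, whose initial value $u_0\in X_\gamma$ is nonzero; in the proof of Theorem~\ref{mthm_MR} the shifted differences also start from nonzero data $U_0$. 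Moreover, the only case the paper actually needs is the fractional one (the exponents $s=\theta_1$, resp.\ $\theta$, used there lie in $(0,1)$), and that is precisely the part you leave as a sketch with acknowledged open points ($T$-uniform interpolation identities, zero-extension, dilation estimates). That said, your underlying objection is substantive and correct: for $s'<s$ with $s>1/p$ (take $p>1$) the inequality with constant $CT^{s-s'}$ uniform in $T$ fails for constant functions as $T\to0$, and the paper's own chain tacitly assumes $T$-uniformity of steps such as $W^{1,1}(0,T)\hookrightarrow L^\infty(0,T)$, which suffer from the same defect for small $T$; a fully rigorous version either restricts to the zero-time-trace part of the space, or to $s\le 1/p$, or settles for $\eta=0$. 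The main theorem is not endangered by this, since in its proof $T^\eta$ is only estimated by $\mathcal{T}^\eta$ and the required smallness comes from the increments of $\Vert u\Vert_{\IE_1}$ and from $\varepsilon$; Remark~\ref{remunif}~b) already notes that $\eta=0$ suffices. But as a proof of Lemma~\ref{le_emb} as stated, your argument is incomplete exactly where it is needed and rests on a reduction (trace zero) that the paper's applications do not license.
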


\begin{proof}
Set $m:=\lfloor p(s-s^\prime )\rfloor$ and $1/r:=m+1/p+s^\prime -s\in [1/p,1)$. Sobolev embeddings and H\"older's inequality yield 
\[ 
H^{s,p}\hookrightarrow H^{m+s^\prime ,r}\overset{1-\frac{1}{r}}{\hookrightarrow } H^{m+s^\prime ,1}\hookrightarrow H^{m-1+s^\prime ,\infty}\overset{1}{\hookrightarrow} H^{m-1+s^\prime ,1}\hookrightarrow
\ldots \hookrightarrow H^{s^\prime ,\infty}\overset{\frac{1}{p}}{\hookrightarrow} H^{s^\prime ,p}.\qedhere
\]
\end{proof}

We will make use of the following classical  Mixed Derivative Theorem, see e.g. \cite[Corollary 4.5.10]{PruessSimonett}.

\vn
\begin{proposition}[Mixed Derivative Theorem]\label{prop_mdt}
If $\theta \in [0,1]$, then 
\[ 
\IE _1(T)\hookrightarrow H^{\theta ,p}(0,T;H^{2-2\theta ,q}(\Omega )).
\]
\end{proposition}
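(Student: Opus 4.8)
The statement to prove is the embedding $\IE_1(T) \hookrightarrow H^{\theta,p}(0,T;H^{2-2\theta,q}(\Omega))$ for $\theta \in [0,1]$, where $\IE_1(T) = L^p(0,T;H^{2,q}_{per}(\Omega)) \cap H^{1,p}(0,T;L^q(\Omega))$. Since the excerpt explicitly says ``we will make use of the following classical Mixed Derivative Theorem, see e.g. \cite[Corollary 4.5.10]{PruessSimonett}'', the cleanest proof is simply to invoke that reference: the general mixed derivative theorem of Sobolevskii / Pr\"uss--Simonett asserts that if $A_0$ (here $\partial_t$ on $H^{1,p}(0,T;X_0)$ with appropriate trace-zero domain, viewed on the base space $\IE_0(T) = L^p(0,T;X_0)$) and $A_1$ (here the realization of the Laplacian giving $L^p(0,T;X_1)$) are two commuting sectorial operators with bounded imaginary powers (or $\mathcal{R}$-sectorial of angle $<\pi/2$) on $\IE_0(T)$, then $\dom(A_0) \cap \dom(A_1) \hookrightarrow \dom(A_0^{1-\theta}) \cap \dom(A_1^\theta)$ with the corresponding norm estimate. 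Identifying the fractional domains $\dom(A_0^{1-\theta})$ with $H^{\theta,p}(0,T;X_0)$ and $\dom(A_1^\theta)$ with $L^p(0,T;H^{2\theta,q}(\Omega))$, and intersecting, gives exactly $H^{\theta,p}(0,T;H^{2-2\theta,q}(\Omega))$ — the point being that membership in both $H^{\theta,p}(0,T;L^q)$ and $L^p(0,T;H^{2\theta,q})$ simultaneously, together with the endpoint regularity in $\IE_1(T)$, characterizes this anisotropic space by interpolation.

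First I would fix notation: set $X_0 = L^q(\Omega)$, $X_1 = H^{2,q}_{per}(\Omega)$, and recall from the theory of maximal $L^p$-regularity for the heat equation (which the paper assumes elsewhere, e.g. for Proposition~\ref{prop_mr_estimate}) that the Laplacian with the stated periodicity/parity boundary conditions generates an analytic semigroup on $X_0$ which has maximal $L^p$-regularity, hence is $\mathcal{R}$-sectorial of angle $<\pi/2$ on $\IE_0(T)$. Second, I would note that the time-derivative operator $\partial_t$ on $L^p(0,T;X_0)$ with zero initial trace is likewise $\mathcal{R}$-sectorial (invertible on a finite interval) and commutes with the (purely spatial) Laplacian. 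Third, I would apply the Dore--Venni / Kalton--Weis type mixed derivative theorem exactly as in \cite[Corollary 4.5.10]{PruessSimonett} to conclude $\dom(\partial_t) \cap \dom(\Delta) \hookrightarrow [\IE_0(T), \dom(\partial_t)]_{1-\theta} \cap [\IE_0(T), \dom(\Delta)]_\theta$ in the sense of continuous embedding, and finally translate the fractional-power domains into the Sobolev--Bessel scale to read off the claimed space.

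The main obstacle — and the only place where care is genuinely needed — is the identification of the fractional power domains and the interplay with the finite time interval $T$. On $(0,T)$ with zero initial data, $\dom((\partial_t)^{1-\theta})$ is the subspace of $H^{1-\theta,p}$... no: one must be careful that $\dom(\partial_t^{\alpha})$ with zero trace equals $_0H^{\alpha,p}(0,T;X_0)$ only for $\alpha$ in certain ranges (the trace at $t=0$ makes sense iff $\alpha > 1/p$), but since we only need the \emph{embedding} into $H^{\theta,p}(0,T;X_0)$ (not an identification), this subtlety causes no trouble: $_0H^{\theta,p} \hookrightarrow H^{\theta,p}$ always. Likewise $\dom(\Delta^\theta) \hookrightarrow H^{2\theta,q}_{per}(\Omega)$ follows from the characterization of complex interpolation spaces / fractional domains of the Laplacian on these periodic-parity domains, which is standard and available in the references the paper already cites. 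One should also remark that the embedding constant may depend on $T$, but this is acceptable here since Proposition~\ref{prop_mdt} is applied on a fixed finite interval (the $T$-uniformity being handled separately, e.g. via Lemma~\ref{le_emb}); if $T$-uniformity were wanted, one extends by zero to $(0,\infty)$ and uses the half-line version. I would keep the written proof to one or two sentences citing \cite[Corollary 4.5.10]{PruessSimonett} together with the maximal regularity of the heat semigroup on $\Omega$, since a self-contained proof would essentially reproduce a chapter of Pr\"uss--Simonett.
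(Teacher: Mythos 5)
Your proposal is correct and follows essentially the same route as the paper, which states this as a classical result and simply cites \cite[Corollary 4.5.10]{PruessSimonett} without giving a proof. Your additional remarks on the fractional power domains, the zero-trace subtlety, and the $T$-dependence of the embedding constant are sound but go beyond what the paper records.
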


\vn
\begin{lemma}\label{le_rhs_e0_easy}
Let $p,q\in(1,\infty )$ such that $2/3p+1/q\le 1 $. Then  for all $v_1,v_2\in \IE _1(T)$, $\partial \in \{\partial _x,\partial _y,\partial _z\}$ and 
$\eta \in \left [0,\tfrac{3}{2}\left (1-\tfrac{2}{3p}-\tfrac{1}{q}\right )\right ]$ there exists a constant $C>0$ such that 
\[ 
\Vert v_1 \partial v_2\Vert _{\IE _0(T)}\le CT^{\eta }\Vert v_1\Vert _{\IE _1(T)}\Vert v_2\Vert _{\IE _1(T)}, \quad C>0.
\] 
\end{lemma}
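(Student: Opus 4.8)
The plan is to reduce the bound to a product estimate and to read off all the exponents from the Mixed Derivative Theorem (Proposition~\ref{prop_mdt}) combined with Sobolev embeddings, producing the factor $T^{\eta}$ from Lemma~\ref{le_emb} (which trades temporal smoothness for a power of $T$) and from the elementary inclusion $L^r(0,T)\hookrightarrow L^p(0,T)$, which holds with norm $T^{1/p-1/r}$ whenever $r\ge p$. Concretely, H\"older's inequality in the space variable gives $\|v_1\,\partial v_2\|_{L^q(\Omega)}\le\|v_1\|_{L^{\alpha}(\Omega)}\|\partial v_2\|_{L^{\beta}(\Omega)}$ with $\tfrac1\alpha+\tfrac1\beta\le\tfrac1q$ (the remaining room being free since $\Omega$ is bounded), and a subsequent H\"older in time lets two temporal integrability exponents combine into one that is $\ge p$, so that $L^c(0,T)\hookrightarrow L^p(0,T)$ applies.

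For the undifferentiated factor I would pick a parameter $\theta_1\in[0,1]$ and for the differentiated one $\theta_2\in[0,\tfrac12]$. Proposition~\ref{prop_mdt} gives $v_1\in H^{\theta_1,p}(0,T;H^{2-2\theta_1,q}(\Omega))$ and, differentiating, $\partial v_2\in H^{\theta_2,p}(0,T;H^{1-2\theta_2,q}(\Omega))$ with $1-2\theta_2\ge0$. Sobolev embedding in $\Omega\subset\IR^3$ puts $v_1\in L^{\alpha_1}(\Omega)$ and $\partial v_2\in L^{\beta_2}(\Omega)$ with $\tfrac1{\alpha_1}=(\tfrac1q-\tfrac{2-2\theta_1}{3})_+$ and $\tfrac1{\beta_2}=(\tfrac1q-\tfrac{1-2\theta_2}{3})_+$; the requirement $\tfrac1{\alpha_1}+\tfrac1{\beta_2}\le\tfrac1q$, needed for $v_1\,\partial v_2\in L^q(\Omega)$, boils down to $\theta_1+\theta_2\le\tfrac32(1-\tfrac1q)$ (and is automatic if one of the two embeddings already reaches $L^\infty$). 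On the temporal side, lowering the smoothness indices to some $\theta_i'<\min\{\theta_i,\tfrac1p\}$ via Lemma~\ref{le_emb} costs $T^{\theta_1-\theta_1'}T^{\theta_2-\theta_2'}$ and yields $v_1\in L^{a_1}(0,T;L^{\alpha_1})$, $\partial v_2\in L^{a_2}(0,T;L^{\beta_2})$ with $\tfrac1{a_i}=\tfrac1p-\theta_i'$; H\"older in $t$ then gives $v_1\,\partial v_2\in L^{c}(0,T;L^q)$ with $\tfrac1c=\tfrac2p-\theta_1'-\theta_2'\le\tfrac1p$, and $L^c(0,T)\hookrightarrow L^p(0,T)$ supplies a further factor $T^{1/p-1/c}$. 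Multiplying the powers of $T$ gives the exponent $(\theta_1-\theta_1')+(\theta_2-\theta_2')+(\tfrac1p-\tfrac1c)=\theta_1+\theta_2-\tfrac1p$. As $\theta_1+\theta_2$ ranges over $[\tfrac1p,\tfrac32(1-\tfrac1q)]$ — an interval nonempty precisely because of the hypothesis $\tfrac{2}{3p}+\tfrac1q\le1$ — this exponent sweeps out $[0,\tfrac32(1-\tfrac1q)-\tfrac1p]=[0,\tfrac32(1-\tfrac{2}{3p}-\tfrac1q)]$, as claimed.

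The only genuine work, and the step I expect to be the main obstacle, is the bookkeeping: for each target $\eta$ in the stated interval one must exhibit admissible $\theta_1,\theta_2$ (hence $\theta_1',\theta_2'$ with $\theta_1'+\theta_2'\ge\tfrac1p$) for which every invoked Sobolev embedding is non-endpoint — so $2-2\theta_1\ge0$, $1-2\theta_2\ge0$, the temporal indices stay below $1/p$, and all resulting Lebesgue exponents lie in $(1,\infty]$ — and, crucially, for which every embedding constant is independent of $T$. The $T$-uniformity is exactly what Lemma~\ref{le_emb}, the scale invariance of the one-dimensional Sobolev embeddings, and the fixed bounded domain $\Omega$ are there to supply; endpoint parameter values are avoided by a small perturbation, since the admissible set of $(\theta_1,\theta_2)$ is relatively open, and the single value $\eta=0$, where no gain in $T$ is needed, follows directly from $v_1\in C([0,T];X_\gamma)$, $X_\gamma\hookrightarrow B^{2-2/p}_{q,p,per}(\Omega)$, and $\partial v_2\in L^p(0,T;H^{1,q}(\Omega))$.
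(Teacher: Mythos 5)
Your argument is essentially the paper's proof: the paper simply fixes your parameters as $\theta_1=\tfrac{2\eta}{3}+\tfrac{2}{3p}$, $\theta_2=\tfrac12\theta_1$, uses Proposition~\ref{prop_mdt}, Lemma~\ref{le_emb} and Sobolev embedding to place $v_1\in L^{3p}(0,T;L^{3q})$ and $v_2\in L^{3p/2}(0,T;H^{1,3q/2})$, and concludes by H\"older exactly as you do, with your bookkeeping $\eta=\theta_1+\theta_2-\tfrac1p$ sweeping out the same interval $\bigl[0,\tfrac32\bigl(1-\tfrac{2}{3p}-\tfrac1q\bigr)\bigr]$. Only your closing aside is slightly off: at the maximal $\eta$ the critical spatial embeddings (with finite target exponents) cannot be avoided by perturbation but are valid anyway, and the case $\eta=0$ does not follow ``directly'' from $X_\gamma\hookrightarrow B^{2-2/p}_{q,p,per}(\Omega)$ (which need not embed into $L^\infty$) --- though this is harmless, since your main parameterized argument with $\theta_1+\theta_2=\tfrac1p$ already covers $\eta=0$.
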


\begin{proof}
Set $\theta _1=\tfrac{2\eta }{3} +\tfrac{2}{3p}$ and $\theta _2=\tfrac{1}{2}\theta _1$. The Mixed Derivative Theorem, Lemma~\ref{le_emb} and Sobolev's embedding yield 
\begin{align*}
&\IE _1(T)\hookrightarrow H^{\theta _1,p}(0,T;H^{2-2\theta _1,q})\overset{2\eta /3}{\hookrightarrow }H^{2/3p,p}(0,T;H^{2-2\theta _1,q})\hookrightarrow L^{3p}(0,T;L^{3q}(\Omega )),\\
& \IE _1(T)\hookrightarrow H^{\theta _2,p}(0,T;H^{2-2\theta _2,q})\overset{\eta /3}{\hookrightarrow }H^{1/3p,p}(0,T;H^{2-2\theta _2,q})\hookrightarrow L^{3p/2 }(0,T;H^{1,3q/2}(\Omega )).
\end{align*}
H\"older's inequality thus implies 
 \begin{align*}
\Vert v_1 \partial v_2\Vert _{L^p(L^q)}&\le \ \left \Vert \Vert v_1\Vert _{L^{3q}} \Vert \partial v_2\Vert _{L^{3q/2}} \right \Vert _{L^p}\le\Vert v_1\Vert _{L^{3p}(L^{3q})}\Vert v_2\Vert _{L^{3p/2}(H^{1,3q/2})}
\le CT^{\eta}\Vert v_1\Vert _{\IE _1}\Vert v_2\Vert _{\IE _1}.\qedhere
\end{align*}
\end{proof}

\vn
\begin{lemma}\label{le_space}
Let $q\in (1,\infty)$, $v_1,v_2\in H^{1+1/q,q}(\Omega)$ and  $w_1:=\int ^z_{-1}\divergence _Hv_1$. Then there exists a constant $C>0$ such that 
\[
\Vert w_1\partial _zv_2\Vert _{L^q}\le C\Vert v_1\Vert _{H^{1+1/q,q}}\Vert v_2\Vert _{H^{1+1/q,q}}.
\]
\end{lemma}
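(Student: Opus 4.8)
The plan is to estimate the product $w_1 \partial_z v_2$ in $L^q(\Omega)$ by treating the horizontal variables $(x,y)\in(0,1)^2$ and the vertical variable $z\in(-1,1)$ separately, exploiting the anisotropic structure. First I would bound the vertical primitive $w_1$ in a space with good integrability in the horizontal variables but merely bounded (or $L^\infty$) in $z$. Indeed $w_1(x,y,z) = \int_{-1}^z \divergence_H v_1(x,y,\zeta)\,\d\zeta$, so pointwise $\abs{w_1(x,y,z)} \le \int_{-1}^1 \abs{\nabla_H v_1(x,y,\zeta)}\,\d\zeta$, which is a function of $(x,y)$ only. Thus $\|w_1\|_{L^\infty_z L^r_{xy}} \lesssim \|\nabla_H v_1\|_{L^1_z L^r_{xy}} \lesssim \|\nabla_H v_1\|_{L^q_z L^r_{xy}}$ for any $r$, where the last step uses that $(-1,1)$ has finite measure. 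Choosing $r$ via a Sobolev embedding $H^{1/q,q}_{xy} \hookrightarrow L^r_{xy}$ (valid since $1/q < 2/q$ so the embedding gives some $r<\infty$, or even a suitable finite $r$), we get $\|w_1\|_{L^\infty_z L^r_{xy}} \lesssim \|\nabla_H v_1\|_{L^q_z H^{1/q,q}_{xy}} \lesssim \|v_1\|_{H^{1+1/q,q}}$.

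Next I would estimate $\partial_z v_2$ in the complementary space $L^q_z L^{r'}_{xy}$ with $1/r + 1/r' = 0$ in the horizontal Hölder pairing and matching exponent $q$ in $z$ — more precisely, one wants $\|w_1 \partial_z v_2\|_{L^q(\Omega)} \le \|w_1\|_{L^\infty_z L^r_{xy}} \|\partial_z v_2\|_{L^q_z L^{s}_{xy}}$ where $1/q = 1/r + 1/s$ in the horizontal integration and the $z$-integration is handled by pulling the $L^\infty_z$ norm out. So one needs $\|\partial_z v_2\|_{L^q_z L^s_{xy}} \lesssim \|v_2\|_{H^{1+1/q,q}}$ with $1/s = 1/q - 1/r$. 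Since $v_2 \in H^{1+1/q,q}(\Omega)$, the vertical derivative $\partial_z v_2$ lies in $H^{1/q,q}(\Omega) = H^{1/q,q}_z L^q_{xy} \cap L^q_z H^{1/q,q}_{xy}$ (by the anisotropic characterization of isotropic Bessel potential spaces, or by interpolation), and the second component embeds into $L^q_z L^s_{xy}$ again via $H^{1/q,q}_{xy} \hookrightarrow L^s_{xy}$. The key arithmetic is to check that the two Sobolev exponents $r$ and $s$ obtained from $H^{1/q,q}_{xy}$ can be chosen to satisfy $1/r + 1/s = 1/q$; since $H^{1/q,q}_{xy}$ embeds into $L^\rho_{xy}$ for all $\rho$ with $1/\rho \ge 1/q - (1/q)/2 = 1/(2q)$ (fractional Sobolev in dimension $2$), one may take $1/r = 1/s = 1/(2q)$, and then indeed $1/r + 1/s = 1/q$, so the Hölder split closes exactly at the borderline.

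The main obstacle I anticipate is the borderline nature of this Sobolev embedding: $H^{1/q,q}((0,1)^2) \hookrightarrow L^{2q}((0,1)^2)$ is precisely the critical Sobolev embedding in two dimensions (since $1/q - 2/q \cdot \ldots$ — concretely $\frac{1}{q} - \frac{s}{2}$ with $s = 1/q$ gives exponent $2q$), which does hold (it is a non-endpoint case of the fractional Sobolev embedding on a bounded Lipschitz domain, as $s=1/q>0$) but leaves no room to spare, so all exponents are forced and one must verify there is no endpoint failure. A secondary technical point is justifying $\partial_z\colon H^{1+1/q,q}(\Omega) \to L^q_z H^{1/q,q}_{xy}$; this follows from the mixed-norm/anisotropic description of $H^{1+1/q,q}(\Omega)$ together with the fact that differentiation in $z$ costs one vertical derivative and leaves the horizontal regularity intact, which can be obtained by real interpolation between the integer-order statements $H^{2,q}\to H^{1,q}$ and $H^{1,q}\to L^q$ composed with the obvious inclusions, or cited from the anisotropic space theory used elsewhere in the paper. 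Once these embeddings are in place, the estimate is just one application of Hölder's inequality in the form $\|fg\|_{L^q(\Omega)} \le \|f\|_{L^\infty_z L^{2q}_{xy}}\|g\|_{L^q_z L^{2q}_{xy}}$, and no $T$-dependence enters since this is a purely spatial estimate.
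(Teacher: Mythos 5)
Your proposal is correct and follows essentially the same route as the paper's proof: an anisotropic H\"older estimate $\Vert w_1\partial_z v_2\Vert_{L^q}\le \Vert w_1\Vert_{L^\infty_z L^{2q}_{xy}}\Vert \partial_z v_2\Vert_{L^q_z L^{2q}_{xy}}$, the pointwise bound $|w_1|\le \int_{-1}^1|\divergence_H v_1|\,\d\zeta$, and the two-dimensional Sobolev embedding $H^{1/q,q}_{xy}\hookrightarrow L^{2q}_{xy}$ combined with anisotropic embeddings of $H^{1+1/q,q}(\Omega)$, exactly as in the paper (which uses the mixed norms $\Vert w_1\Vert_{L^{2q}_{xy}L^\infty_z}$ and $\Vert\partial_z v_2\Vert_{L^{2q}_{xy}L^q_z}$, i.e.\ the same exponents with the immaterial opposite nesting order). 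No gap.
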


\begin{proof}
Similarly as in  \cite[Lemma 5.1]{Hieber2016} we obtain by anisotropic H\"{o}lder's inequality  and Sobolev inequalities 
\begin{align*}
\Vert w_1\partial _zv_2\Vert _{L^q}\le &\ \Vert w_1\Vert _{L^{2q}_{xy}L^\infty _z}\Vert \partial _zv_2\Vert _{L^{2q}_{xy}L^q _z}\\
\le &\ C\Vert \divergence _H v_1\Vert _{L^{2q}_{xy}L^1 _z}\Vert \partial _zv_2\Vert _{L^{2q}_{xy}L^q _z}\\
\le &\ C\Vert v_1\Vert _{H^{1+1/q}_{xy}L_z^1}\Vert v_2\Vert _{H^{1/q,q}_{xy}H^{1,q}_z}. \qedhere
\end{align*}
\end{proof}

\vn
\begin{lemma}\label{le_rhs_e0_diffi}
Let $p,q\in(1,\infty )$ such that $1/p+1/q\le 1$ and $\eta \in \left [0,1-\tfrac{1}{q}-\tfrac{1}{p}\right ]$. Then for all $v_1,v_2\in \IE _1(T)$ and $w_1$ given by  
$w_1:=\int ^z_{-1}\divergence _Hv_1$ there exists a constant $C>0$ such that 
\[ 
\Vert w_1 \partial _zv_2\Vert _{\IE _0(T)}\le CT^\eta\Vert v_1\Vert _{\IE _1(T)}\Vert v_2\Vert _{\IE _1(T)}.
\]
\end{lemma}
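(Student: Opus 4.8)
The plan is to reduce the mixed term $w_1\partial_z v_2$ in $\IE_0(T) = L^p(0,T;L^q(\Omega))$ to the pointwise-in-time spatial estimate already provided by Lemma~\ref{le_space}, and then to absorb the time integration by a H\"older argument together with the embedding lemmas. First I would fix $t\in(0,T)$ and apply Lemma~\ref{le_space} with $v_i = v_i(t,\cdot)$, which gives
\[
\Vert w_1(t)\,\partial_z v_2(t)\Vert_{L^q(\Omega)} \le C\,\Vert v_1(t)\Vert_{H^{1+1/q,q}}\,\Vert v_2(t)\Vert_{H^{1+1/q,q}},
\]
noting that $w_1(t) = \int_{-1}^z \divergence_H v_1(t,\cdot)$ is exactly the vertical primitive appearing there. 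Taking the $L^p$-norm in $t$ and applying H\"older's inequality in time with a suitable splitting of the exponent $p$ then reduces everything to controlling $v_1,v_2$ in a space of the form $L^{r}(0,T;H^{1+1/q,q}(\Omega))$ for an appropriate $r$.

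Next I would identify the correct Mixed Derivative Theorem exponent. To land in $H^{1+1/q,q}$ in space we need $2-2\theta = 1+1/q$, i.e. $\theta = \tfrac12(1 - 1/q)$, so Proposition~\ref{prop_mdt} gives $\IE_1(T)\hookrightarrow H^{\theta,p}(0,T;H^{1+1/q,q}(\Omega))$ with this $\theta$. By Sobolev embedding in the time variable (or directly), $H^{\theta,p}(0,T)\hookrightarrow L^{r}(0,T)$ with $1/r = 1/p - \theta$ as long as $\theta p < 1$; the condition $1/p+1/q\le 1$ is precisely what is needed here, since it is equivalent to $\theta = \tfrac12(1-1/q) \le \tfrac{1}{2p}\cdot\ldots$ — more carefully, one checks $\theta p \le 1 \iff \tfrac{p}{2}(1-\tfrac1q)\le 1$, and a short computation shows this follows from the hypothesis (with room to spare, which produces the gain $T^\eta$). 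Then writing the product estimate via H\"older, $\tfrac1p = \tfrac1r + \tfrac1{r'}$ with $r=r'$ after symmetrizing (or using the worst of the two), one gets
\[
\Vert w_1\partial_z v_2\Vert_{L^p(0,T;L^q)} \le C\,\Vert v_1\Vert_{L^{r}(0,T;H^{1+1/q,q})}\Vert v_2\Vert_{L^{r}(0,T;H^{1+1/q,q})},
\]
and Lemma~\ref{le_emb} supplies the factor $T^\eta$ when we pass from the sharp exponent down to the range $\eta\in[0,1-1/p-1/q]$, exactly as was done in the proof of Lemma~\ref{le_rhs_e0_easy}.

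Concretely, I would choose $\theta_1 := \tfrac12(1-\tfrac1q) + \tfrac{\eta}{1}$ adjusted so that the slack in the time-Sobolev embedding is exactly $\eta$, then chain
\[
\IE_1(T)\hookrightarrow H^{\theta_1,p}(0,T;H^{1+1/q,q}(\Omega)) \overset{\eta}{\hookrightarrow} H^{\theta,p}(0,T;H^{1+1/q,q}(\Omega)) \hookrightarrow L^{2p}(0,T;H^{1+1/q,q}(\Omega)),
\]
apply this to both $v_1$ and $v_2$, and conclude by H\"older in time with exponents $2p,2p$ summing to $p$ together with the pointwise bound from Lemma~\ref{le_space}. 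The only place one must be slightly careful is verifying that the resulting time-integrability exponent is $\ge 1$ and that the Sobolev embedding $H^{\theta,p}(0,T)\hookrightarrow L^{2p}(0,T)$ is admissible, i.e. $\theta \ge \tfrac1p - \tfrac1{2p} = \tfrac1{2p}$ is \emph{not} required — rather we need $\theta p \le 1$ for $H^{\theta,p}\hookrightarrow L^{2p}$ to even make sense via $1/(2p) \ge 1/p - \theta$, which rearranges to $\theta \ge 1/(2p)$; combining with $\theta = \tfrac12(1-1/q)$ this is $1 - 1/q \ge 1/p$, again our hypothesis.

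The main obstacle I anticipate is bookkeeping the exponents so that the three ingredients — the anisotropic spatial estimate of Lemma~\ref{le_space} (which costs $1+1/q$ spatial derivatives), the Mixed Derivative Theorem trade-off between time and space smoothness, and the time-Sobolev embedding into $L^{2p}$ — are simultaneously compatible, and checking that the borderline case $1/p+1/q=1$ is still handled (there $\eta$ must be taken $0$ and the embedding $H^{\theta,p}(0,T)\hookrightarrow L^{2p}(0,T)$ is the critical one, which does hold). Everything else is a routine repetition of the argument in Lemma~\ref{le_rhs_e0_easy}, with the generic Sobolev product replaced by the sharper anisotropic Lemma~\ref{le_space} which is what allows the weaker hypothesis $1/p+1/q\le 1$ instead of $2/3p+1/q\le 1$.
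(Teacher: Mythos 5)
Your overall strategy coincides with the paper's: apply Lemma~\ref{le_space} pointwise in time, use H\"older in time with exponents $(2p,2p)$, and reduce everything to the embedding $\IE_1(T)\hookrightarrow L^{2p}(0,T;H^{1+1/q,q}(\Omega))$, which via the Mixed Derivative Theorem is exactly where the hypothesis $1/p+1/q\le 1$ enters (through $\tfrac12(1-\tfrac1q)\ge\tfrac1{2p}$, equivalently $2-\tfrac1p\ge 1+\tfrac1q$). Two remarks on your bookkeeping: the worry about $\theta p\le 1$ is both unnecessary (if $\theta p>1$ then $H^{\theta,p}(0,T)\hookrightarrow L^\infty(0,T)\hookrightarrow L^{2p}(0,T)$, so the target embedding only improves) and, as an implication, false — $\tfrac p2(1-\tfrac1q)\le 1$ does \emph{not} follow from $1/p+1/q\le 1$ (take $p=q=10$); the condition you actually need and verify is $\theta\ge 1/(2p)$.

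The genuine flaw is in the concrete chain you propose for producing the factor $T^\eta$: the first embedding $\IE_1(T)\hookrightarrow H^{\theta_1,p}(0,T;H^{1+1/q,q}(\Omega))$ with $\theta_1=\tfrac12(1-\tfrac1q)+\eta$ is false for $\eta>0$. The Mixed Derivative Theorem only yields $\IE_1(T)\hookrightarrow H^{\theta_1,p}(0,T;H^{2-2\theta_1,q}(\Omega))$, and $2-2\theta_1=1+\tfrac1q-2\eta<1+\tfrac1q$; you cannot keep full spatial regularity $1+1/q$ while simultaneously raising the time regularity above $\tfrac12(1-\tfrac1q)$, since that would amount to total parabolic regularity $\theta_1+\tfrac12\bigl(1+\tfrac1q\bigr)=1+\eta>1$, beyond what $\IE_1$ contains. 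The slack has to be harvested where the hypothesis on $\eta$ actually bites: either (i) as in the paper, apply the Mixed Derivative Theorem at $\theta=\tfrac\eta2+\tfrac1{2p}$, observe that $2-2\theta=2-\eta-\tfrac1p\ge 1+\tfrac1q$ precisely because $\eta\le 1-\tfrac1p-\tfrac1q$, then use Lemma~\ref{le_emb} to drop the time regularity from $\theta$ to $\tfrac1{2p}$ (gaining $T^{\eta/2}$ per factor) and finish with the sharp Sobolev embedding $H^{1/2p,p}(0,T)\hookrightarrow L^{2p}(0,T)$; or (ii) keep $\theta=\tfrac12(1-\tfrac1q)$, embed $H^{\theta,p}(0,T)\hookrightarrow L^{r}(0,T)$ with $\tfrac1r=\tfrac1p-\theta\le\tfrac1{2p}$, and pass from $L^r$ to $L^{2p}$ by H\"older in time, which costs $T^{\frac1{2p}-\frac1r}=T^{\frac12(1-\frac1p-\frac1q)}$ per factor and hence covers the stated range of $\eta$ (a larger power of $T$ than $\eta$ is harmless, or can be tuned by an intermediate exponent). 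With either correction, the rest of your argument — pointwise Lemma~\ref{le_space} plus H\"older $(2p,2p)$ — is exactly the paper's proof.
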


\begin{proof}
Setting  $\theta =\tfrac{\eta}{2}+\tfrac{1}{p}$, Lemma~\ref{le_emb} and the Mixed Derivative Theorem, Proposition~\ref{prop_mdt}, yield 
\begin{align*}
&\IE _1(T)\hookrightarrow H^{\theta ,p}(0,T;H^{2-2\theta ,q})\overset{\tfrac{\eta }{2}}{\hookrightarrow }H^{1/2p,p}(0,T;H^{2-2\theta ,q})\hookrightarrow L^{2p}(0,T;H^{1+1/q,q}(\Omega )),
\end{align*}
Putting $X:=H^{1+1/q,q}$ and $L^p(L^q):=\IE_0(T)$, Lemma~\ref{le_space} and the above embeddings imply
\begin{align*}
\Vert w_1 \partial _zv_2\Vert _{L^p(L^q)}&\le C\left \Vert \Vert v_1\Vert _{X} \Vert v_2\Vert _{X} \right \Vert _{L^p}\le  C\Vert v_1\Vert _{L^{2p}(X)}\Vert v_2\Vert _{L^{2p}(X)}\le CT^{\eta}\Vert 
v_1\Vert _{\IE _1(T)}\Vert v_2\Vert _{\IE _1(T)}.\qedhere
\end{align*}
\end{proof}

\subsection{Maximal regularity results}
In this subsection we prove that the vertical and horizontal solution of the primitive  equations belong to the maximal regularity class $\IE_1^u(T)$ as well as that 
the solution the  linearized system associated with  (\ref{eq_diff-eq}) fulfills a maximal regularity result estimate. We start by considering  the linearization of \eqref{eq_diff-eq}. It 
corresponds to the difference equation of (\ref{NSe}) and (\ref{eq_PE}).

Given $F\in \IE_0(T)$ and initial data $U_0\in X_\gamma$ we consider the linear problem 
\begin{equation}\label{eq_diff-eq_abstract}
\left \{\begin{array}{rll}
\partial _tU-\Delta U=& \ F-\nabla _\varepsilon P&\text{ in }(0,T)\times\Omega ,\\
\nabla _\varepsilon \cdot U=& \ 0&\text{ in }(0,T)\times\Omega ,\\
U ,P &\text{ periodic }&\text{ in }x,y,z,\\
U(0)=&\ U_0&\text{ in }\Omega ,
\end{array}\right .
\end{equation}
where $\nabla _\varepsilon :=(\partial _x,\partial _y,\varepsilon ^{-1}\partial _z)^T$. The functions $U$ and $P$ are the unknowns and represent the velocity and pressure differences.

We now aim to prove a maximal regularity estimate for $U$, where the constants are  \emph{independent} of the aspect ratio and the pressure gradient. 

\vn
\begin{lemma}\label{le_per_ex}
Let $\varepsilon >0$, $q\in (1,\infty )$ and assume that $F=(f_H,f_z)\in L^q(\Omega )$ and $P\in H^{1,q}_{per}(\Omega )$ are satisfying the equation
$-(\Delta _H +\varepsilon ^{-2}\partial _z^2)P=\divergence (f_H,\varepsilon ^{-1}f_z)$ for $\varepsilon>0$. Then there exists a constant $C>0$, independent of $\varepsilon$, such that
\[
\Vert (\nabla _HP,\varepsilon ^{-1}\partial _zP)\Vert _{q}\le C \Vert F\Vert _q.
\]
\end{lemma}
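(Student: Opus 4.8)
The plan is to reduce the estimate to a statement about a standard (scalar, periodic) elliptic problem by an anisotropic change of variables that absorbs the factor $\varepsilon$, so that the $\varepsilon$-uniformity becomes transparent. Concretely, introduce the rescaling $z = \varepsilon \zeta$, i.e. work with $\tilde P(x,y,\zeta) := P(x,y,\varepsilon\zeta)$ on the stretched cylinder $(0,1)^2\times(-\varepsilon^{-1},\varepsilon^{-1})$ (or, what is cleaner here, keep the domain fixed but track the operator): under this change the operator $-(\Delta_H + \varepsilon^{-2}\partial_z^2)$ becomes the ordinary Laplacian $-\Delta$ in the $(x,y,\zeta)$ variables, the right-hand side $\divergence(f_H,\varepsilon^{-1}f_z)$ becomes $\nablax\!\cdot g$ with $g=(\tilde f_H,\tilde f_z)$ of the \emph{same} $L^q$-norm up to the Jacobian, and the quantity to be estimated, $(\nabla_H P,\varepsilon^{-1}\partial_z P)$, becomes exactly the full gradient $\nabla \tilde P$ evaluated in the new coordinates. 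Thus the claim is equivalent to the $L^q$-boundedness, uniform in the (now irrelevant) length of the cylinder, of the map $g\mapsto \nabla P$ where $P$ solves $-\Delta P=\divergence g$ with periodic boundary conditions — i.e. boundedness of the periodic analogue of the Helmholtz/Leray projection and of the Riesz transforms $\partial_j\partial_k(-\Delta)^{-1}$ on $L^q$.

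The key steps, in order, are: (1) make the change of variables precise and verify that all three objects (operator, data, output) transform as claimed, being careful that the periodicity in $x,y$ is untouched and the periodicity/parity in $z$ transfers to $\zeta$; (2) reduce to the core $L^q$-multiplier estimate $\|\nabla^2(-\Delta)^{-1}h\|_q \le C_q\|h\|_q$ on the relevant periodic cell, which follows from the classical Mikhlin–Marcinkiewicz multiplier theorem on the torus (the symbols $\xi_j\xi_k/|\xi|^2$ satisfy the required derivative bounds, uniformly, and the zero frequency is handled by the mean-value constraint implicit in $\divergence g$ having zero average); (3) check that the constant $C_q$ from this multiplier theorem does not depend on the side length of the periodic box in the $\zeta$-direction — this is the crucial point for $\varepsilon$-uniformity, and it holds because rescaling a torus rescales the dual lattice but leaves the Mikhlin constant of a homogeneous-degree-zero symbol invariant; (4) undo the change of variables, noting that the Jacobian factors $\varepsilon^{\pm 1}$ appearing in $\|\cdot\|_{L^q}$ on both sides cancel exactly, yielding $\|(\nabla_H P,\varepsilon^{-1}\partial_z P)\|_q \le C\|F\|_q$ with $C$ independent of $\varepsilon$.

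An alternative route, if one prefers to stay on the fixed domain $\Omega$, is to argue directly: write $P = (-\Delta_H - \varepsilon^{-2}\partial_z^2)^{-1}\divergence(f_H,\varepsilon^{-1}f_z)$ and expand in the Fourier series on $\Omega$, so that $(\nabla_H P,\varepsilon^{-1}\partial_z P)$ corresponds to the Fourier multiplier with symbol $m_\varepsilon(k,\ell) = (k,\varepsilon^{-1}\ell)\otimes(k,\varepsilon^{-1}\ell)\,/\,(|k|^2+\varepsilon^{-2}\ell^2)$ (acting on $(f_H,f_z)$), $k\in 2\pi\IZ^2$, $\ell\in\pi\IZ$. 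Setting $\lambda=(k,\varepsilon^{-1}\ell)$ one sees $m_\varepsilon = \lambda\otimes\lambda/|\lambda|^2$, which is a Mikhlin symbol with constants independent of $\varepsilon$; the discrete Mikhlin theorem then gives the bound, again uniformly in $\varepsilon$.

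The main obstacle is step (3)/the $\varepsilon$-uniformity of the multiplier constant: one must make sure that the Mikhlin–type bound on the periodic (or stretched) domain does not secretly depend on $\varepsilon$ through the geometry of the lattice. This is where the homogeneity of the symbol $\lambda\otimes\lambda/|\lambda|^2$ is essential — the symbol and all its scaled derivatives $|\alpha||\partial^\alpha m_\varepsilon|$ are bounded by constants depending only on the dimension, not on $\varepsilon$ — and one should cite (or briefly reprove) a version of the discrete Mikhlin multiplier theorem with a constant depending only on these symbol bounds and on $q$. Beyond that, the handling of the zero mode and the parity/periodicity bookkeeping are routine.
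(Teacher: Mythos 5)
Your proposal is correct and, in substance, the same as the paper's proof: the ``alternative route'' you sketch is exactly the argument used there, namely writing $(\nabla_H P,\varepsilon^{-1}\partial_z P)=\mathcal{F}^{-1}m_\varepsilon\mathcal{F}F$ with $m_\varepsilon(k)=-k_\varepsilon\otimes k_\varepsilon/|k_\varepsilon|^2$, $k_\varepsilon=(k_1,k_2,\varepsilon^{-1}k_3)$, observing that $k^\gamma D^\gamma m_\varepsilon(k)=k_\varepsilon^\gamma D^\gamma m(k_\varepsilon)$ is bounded uniformly in $\varepsilon$ by homogeneity, and invoking a periodic Mikhlin multiplier theorem whose constant depends only on these symbol bounds and $q$. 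Your primary rescaling route is just a physical-space repackaging of the same scaling cancellation, and your step (3) concern is resolved precisely by the lattice-independence of the multiplier constant that the paper's cited periodic Mikhlin theorem provides.
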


\begin{proof}
Denote by  $\cF$ the Fourier transform. For $k_\varepsilon  =(k _1,k _2,\varepsilon ^{-1}k _3)^T$ and $m(k) = -\frac{k  \otimes k }{\vert k \vert ^{2}}\in \IR^{3\times 3}$ set  
$m_\varepsilon (k)=m(k_\varepsilon)$. Then 
$$
(\nabla _H,\varepsilon ^{-1}\partial _z)P=\cF ^{-1}m_\varepsilon\cF F
$$ 
and $k \nabla m_\varepsilon (k)=k_\varepsilon  \nabla m(k_\varepsilon)$. Hence, 
\[
\sup _{\gamma \in  \{ 0, 1\}^3}\sup_{k \ne 0}\vert k ^\gamma D^\gamma m_\varepsilon(k)\vert =\sup _{\gamma \in  \{ 0, 1\}^3}\sup_{k _\varepsilon\ne 0}\vert k _\varepsilon^\gamma D^\gamma m(k_\varepsilon)\vert =1,
\]  
and Mikhlin's theorem in the period setting, see e.g. \cite[Proposition 4.5]{HECK20093739}, implies that $m_\varepsilon$ is an $L^p$-Fourier multiplier satisfying 
$\Vert \cF ^{-1}m_\varepsilon \cF \Vert _{\cL (L^q(\Omega ))} \leq C$ for some $C=C(q)>0$.
\end{proof}

\vn
\begin{proposition}\label{prop_mr_estimate}
Let $p,q\in (1,\infty )$, $T>0$, $F\in \IE _0(T)$, $U_0\in X_\gamma $ and $\varepsilon >0$. Then there is a unique solution $U,P$ to the equation
(\ref{eq_diff-eq_abstract}) with $U\in \IE_1(T)$ and $\nabla _\varepsilon P\in \IE_0(T)$, where $P$ is unique up to a constant. Moreover, there exist constants $C>0$ and $C_T>0$, independent of $\varepsilon$,  such that
\begin{equation*}
\Vert U\Vert _{\IE _1(T)}\le  C\Vert F\Vert _{\IE _0(T)}+C_T\Vert U_0\Vert _{X_\gamma }.
\end{equation*}
\end{proposition}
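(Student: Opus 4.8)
The strategy is to decouple \eqref{eq_diff-eq_abstract} into a Helmholtz--Leray projection part, which recovers the pressure, and a heat equation part for the velocity, so that the constants can be tracked to be $\varepsilon$-independent. First I would introduce the $\varepsilon$-scaled Leray projection $\mathbb{P}_\varepsilon$ associated with $\nabla_\varepsilon$: formally $\mathbb{P}_\varepsilon = \Id - \nabla_\varepsilon(-\Delta_H - \varepsilon^{-2}\partial_z^2)^{-1}\divergence(\cdot)$ acting componentwise with the scaling built in. Lemma~\ref{le_per_ex} is precisely the statement that $\nabla_\varepsilon(-\Delta_H - \varepsilon^{-2}\partial_z^2)^{-1}\divergence$ is bounded on $L^q(\Omega)$ with norm independent of $\varepsilon$; hence $\mathbb{P}_\varepsilon$ is a bounded projection on $L^q(\Omega)$ with operator norm uniformly bounded in $\varepsilon$. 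Applying $\mathbb{P}_\varepsilon$ to the first line of \eqref{eq_diff-eq_abstract} annihilates $\nabla_\varepsilon P$ (since $\divergence_\varepsilon U = 0$ forces $U$ to lie in the range of $\mathbb{P}_\varepsilon$, so $\mathbb{P}_\varepsilon U = U$ and $\mathbb{P}_\varepsilon \partial_t U = \partial_t U$, $\mathbb{P}_\varepsilon \Delta U = \Delta \mathbb{P}_\varepsilon U = \Delta U$ because the Laplacian commutes with the Fourier multiplier $m_\varepsilon$), leaving the abstract Cauchy problem $\partial_t U - \Delta U = \mathbb{P}_\varepsilon F$, $U(0) = U_0$.

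The second step is to invoke maximal $L^p$--$L^q$-regularity for the Laplacian on $\Omega$ with the stated periodicity and parity conditions. The Laplacian $\Delta$ with periodic/even/odd conditions generates a bounded analytic semigroup on $X_0^u = L^q_\sigma$-type spaces and has maximal $L^p$-regularity; this is standard (it follows from Mikhlin-type multiplier theorems on the torus, exactly as in Lemma~\ref{le_per_ex}, or can be quoted from the references \cite{Hieber2016, HieberKashiwabaraHussein2016} where the analogous Stokes-type operators are treated). Maximal regularity gives a unique $U \in \IE_1(T)$ with
\[
\Vert U\Vert_{\IE_1(T)} \le C\Vert \mathbb{P}_\varepsilon F\Vert_{\IE_0(T)} + C_T\Vert U_0\Vert_{X_\gamma},
\]
where the trace-space term appears because $U_0 \in X_\gamma = (X_0^u, X_1^u)_{1-1/p,p}$ is exactly the optimal class of initial data for which the solution lies in $\IE_1(T)$. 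Combining with the uniform bound $\Vert \mathbb{P}_\varepsilon F\Vert_{\IE_0(T)} \le C\Vert F\Vert_{\IE_0(T)}$ from Lemma~\ref{le_per_ex} (applied pointwise in time and then integrated in $L^p(0,T)$) yields the asserted estimate with $\varepsilon$-independent $C$.

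The third step is to recover the pressure and close the argument. Once $U \in \IE_1(T)$ is known, define $\nabla_\varepsilon P := (\Id - \mathbb{P}_\varepsilon)(F - \partial_t U + \Delta U) = (\Id - \mathbb{P}_\varepsilon)F$, which lies in $\IE_0(T)$ with the uniform bound, and check that the pair $(U,P)$ solves \eqref{eq_diff-eq_abstract}; uniqueness of $U$ follows from uniqueness in the maximal regularity class for the heat equation, and $P$ is then determined up to a constant by $\nabla_\varepsilon P \in \IE_0(T)$ together with the periodicity. The main obstacle I anticipate is not any single hard estimate but rather the bookkeeping needed to verify that the various operators ($\mathbb{P}_\varepsilon$, the heat semigroup, and the projections onto the even/odd and solenoidal subspaces) all commute appropriately and that the $\varepsilon$-dependence is genuinely confined to $\mathbb{P}_\varepsilon$ — where Lemma~\ref{le_per_ex} controls it — while the maximal-regularity constant $C$ for $\partial_t - \Delta$ is intrinsically $\varepsilon$-free. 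A secondary subtlety is ensuring the compatibility of $U_0 \in X_\gamma$ with the divergence-free constraint $\nabla_\varepsilon \cdot U_0 = 0$; since in our application $U_0 = 0$ this is automatic, but for the general statement one notes that $X_\gamma$ already encodes the solenoidal condition and the relevant boundary traces, so $\mathbb{P}_\varepsilon U_0 = U_0$.
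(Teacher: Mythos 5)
Your proposal follows essentially the same route as the paper: the $\varepsilon$-scaled Helmholtz projection $\mathbb{P}_\varepsilon=\Id-\nabla_\varepsilon\Delta_\varepsilon^{-1}\nabla_\varepsilon\cdot$, bounded uniformly in $\varepsilon$ via Lemma~\ref{le_per_ex}, commuting with $\Delta$ by periodicity, which reduces \eqref{eq_diff-eq_abstract} to the heat equation with right-hand side $\mathbb{P}_\varepsilon F$, and then maximal $L^p$-$L^q$-regularity of the periodic Laplacian gives the $\varepsilon$-independent estimate. Your additional step recovering $\nabla_\varepsilon P=(\Id-\mathbb{P}_\varepsilon)F$ and the uniqueness discussion only spell out what the paper leaves implicit, so the argument is correct and matches the paper's proof.
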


\begin{proof}
First, one defines the $\varepsilon$-dependent Helmholtz projection
\begin{align*}
\IP_{\varepsilon}:= \hbox{Id}- \nabla_{\varepsilon}\Delta_{\varepsilon}^{-1}\divergence_{\varepsilon}, \quad \hbox{where }  \Delta_{\varepsilon} = \nabla_{\varepsilon}\cdot \nabla_{\varepsilon},
\end{align*} 	
By Lemma~\ref{le_per_ex} this is a bounded projection with uniform norm bound independent of $\varepsilon$.

First, we we apply this to \eqref{eq_diff-eq_abstract}. Taking into account that due to periodicity $\IP_{\varepsilon}\Delta = \Delta\IP_{\varepsilon}$	and $\IP_{\varepsilon}\nabla_{\varepsilon} P=0$ hold, the equation \eqref{eq_diff-eq_abstract} reduces to the heat equation with right hand side $\IP_{\varepsilon}F$.

Maximal $L^p$-regularity of the three-dimensional Laplacian in the periodic setting  yields 
\[
\Vert U\Vert _{\IE _1(T)}\le   C\Vert \IP_{\varepsilon}F \Vert _{\IE _0(T)}+C_T\Vert U_0\Vert _{X_\gamma } \le  C\Vert F \Vert _{\IE _0(T)}+C_T\Vert U_0\Vert _{X_\gamma }.\qedhere
\]
\end{proof}

Finally, we prove that the solution $u=(v,w)$ of the primitive equations belongs to the maximal regularity class $\IE_1^u(T)$. 

\vn
\begin{proposition}\label{prop_w_in_E1}
Let $p,q$ fulfill Assumption (A) and let $v$ be the strong solution of the primitive equations associated to  $v_0$ satisfying $(v_0,w_0)\in X_\gamma $. Then 
$$
u=(v,w)\in \IE _1^u(T) \mbox{ for all } T>0.
$$
\end{proposition}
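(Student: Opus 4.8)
The plan is to split $u=(v,w)$ into its horizontal and vertical parts and treat them separately. For the horizontal velocity $v$, I would invoke the existing well-posedness theory for the primitive equations in the maximal-$L^p$-$L^q$-setting (Remark~d), i.e.\ \cite{GigaGriesHieberHusseinKashiwabara2017}): since $(v_0,w_0)\in X_\gamma$ and Assumption~(A) holds, the strong solution $v$ belongs to $\IE_1^v(T)$ for all $T>0$. It then remains to show that the reconstructed vertical velocity
\[
w(\cdot,\cdot,z)=-\int_{-1}^z\divergence_H v(\cdot,\cdot,\zeta)\,\d\zeta
\]
inherits enough regularity so that $u=(v,w)\in\IE_1^u(T)$, which by definition means $w\in L^p(0,T;H^{2,q}(\Omega))$ and $\partial_t w\in L^p(0,T;L^q(\Omega))$ (together with the divergence and parity conditions, which are immediate from the construction).

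For the spatial regularity, I would differentiate the formula for $w$: a horizontal derivative of $w$ is $-\int_{-1}^z \partial_H\divergence_H v\,\d\zeta$, and integration in $\zeta$ over a bounded interval is a bounded operation on $H^{r,q}_z$ for $r\ge 0$; meanwhile $\partial_z w=-\divergence_H v$ directly. Thus each second-order derivative $\partial^2 w$ is controlled by $\divergence_H v$ or its horizontal derivatives up to order two, so $\Vert w(t)\Vert_{H^{2,q}(\Omega)}\lesssim \Vert v(t)\Vert_{H^{2,q}_{xy}H^{1,q}_z}\le\Vert v(t)\Vert_{H^{2,q}(\Omega)}$. Integrating the $p$-th power in time and using $v\in L^p(0,T;X_1)$ gives $w\in L^p(0,T;H^{2,q}(\Omega))$. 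For the time derivative, differentiating under the integral sign yields $\partial_t w(\cdot,\cdot,z)=-\int_{-1}^z\divergence_H \partial_t v(\cdot,\cdot,\zeta)\,\d\zeta$, so $\Vert \partial_t w(t)\Vert_{L^q(\Omega)}\lesssim\Vert \partial_t v(t)\Vert_{H^{1,q}(\Omega)}$. This is where some care is needed: $v\in\IE_1^v(T)$ only gives $\partial_t v\in L^p(0,T;L^q)$, not $L^p(0,T;H^{1,q})$, so one must instead read off the needed regularity of $\partial_t v$ from the $v$-equation in \eqref{eq_PE}, namely $\partial_t v=\Delta v-u\cdot\nabla v-\nabla_H p$. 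The Laplacian term lies in $L^p(0,T;L^q)$; the pressure term can be handled by the horizontal Helmholtz projection as in Lemma~\ref{le_per_ex} and the surjectivity properties used there; and the nonlinearity $u\cdot\nabla v$ is estimated by Lemmas~\ref{le_rhs_e0_easy} and \ref{le_rhs_e0_diffi} (which apply precisely because Assumption~(A) forces $2/3p+1/q\le1$ and $1/p+1/q\le1$), giving $u\cdot\nabla v\in L^p(0,T;L^q)$ with a norm controlled by $\Vert v\Vert_{\IE_1^v(T)}$. Hence $\partial_t w=-\int_{-1}^z\divergence_H(\Delta v-u\cdot\nabla v-\nabla_H p)\,\d\zeta$, and one estimates directly that this lies in $L^p(0,T;L^q(\Omega))$ — note $\divergence_H\nabla_H p$ is a second-order horizontal derivative of $p$, which is controlled via the pressure estimate, and $\divergence_H\Delta v$ is third order in $v$, which is the genuinely delicate term.

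The main obstacle is therefore this last point: naively, $\partial_t w$ would require three horizontal derivatives of $v$, one more than $\IE_1^v(T)$ provides. The resolution is that one does \emph{not} need $w\in H^{1,p}(0,T;L^q)$ via brute-force differentiation of the kinematic formula, but rather one should argue that $w$ solves its own linear (heat-type) evolution equation obtained by applying $-\int_{-1}^z\divergence_H(\cdot)$ to the $v$-equation after first using the structure $\Delta v=\partial_z^2 v+\Delta_H v$ and $\partial_z w=-\divergence_H v$ to rewrite the vertical-diffusion contribution $-\int_{-1}^z\divergence_H\partial_z^2 v\,\d\zeta=-\partial_z^2\!\!\int_{-1}^z\divergence_H v\,\d\zeta=\partial_z^2 w$ (up to boundary terms that vanish by the parity/boundary conditions of $v$ at $z=-1$). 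This turns the third-order obstruction into $\Delta w$ plus horizontal derivatives only, so the equation for $w$ is $\partial_t w-\Delta w=-\int_{-1}^z\divergence_H(\Delta_H v-u\cdot\nabla v-\nabla_H p)\,\d\zeta+$(boundary terms), whose right-hand side is in $\IE_0(T)$ by the estimates above, and whose initial datum $w_0$ together with the parity conditions lies in the appropriate trace space (indeed $(v_0,w_0)\in X_\gamma$). Maximal $L^p$-regularity of the Laplacian in the periodic setting then yields $w\in\IE_1(T)$, completing the proof that $u=(v,w)\in\IE_1^u(T)$ for all $T>0$.
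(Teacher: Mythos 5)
Your core idea---do not differentiate the kinematic formula, but derive a heat equation for $w$ by applying $-\int_{-1}^z\divergence_H(\cdot)$ to the $v$-equation, absorbing the vertical diffusion into $\partial_z^2 w$, and then invoke maximal $L^p$-regularity---is indeed the paper's strategy (and, as a small slip, once $\Delta w$ stands on the left, no $\Delta_H v$ may remain on the right: $-\int_{-1}^z\divergence_H\Delta_H v=\Delta_H w$ is already part of $\Delta w$, so your displayed equation double-counts it; likewise the pointwise bound $\Vert w(t)\Vert_{H^{2,q}}\lesssim\Vert v(t)\Vert_{H^{2,q}}$ in your spatial-regularity paragraph is false, since purely horizontal second derivatives of $w$ cost three horizontal derivatives of $v$). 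The genuine gap is that the two steps you treat as routine are exactly where the work lies. First, the forcing of your heat equation is not given data: since $u\cdot\nabla v=v\cdot\nabla_H v+w\,\partial_z v$, it depends linearly on the unknown $w$, and bounding those terms requires $\Vert w\Vert_{\IE_1}$---the very quantity to be established; trying to bound them through $w=-\int_{-1}^z\divergence_H v$ instead again costs a third horizontal derivative of $v$ (e.g. $\nabla_H w\cdot\partial_z v$ with $\nabla_H w=-\int_{-1}^z\nabla_H\divergence_H v$), which $\IE_1^v(T)$ does not provide. The paper resolves this circularity by splitting $f=f_1+f_2+f_3$ with $f_2$ depending only on $v$, viewing $B_v:=-f_1(v,\cdot)-f_3(v,\cdot)$ as a bounded operator in $\cL(\IE_1(T),\IE_0(T))$ with norm $\le C\Vert v\Vert_{\IE_1(T)}$, choosing $T^*$ so small that $\Vert B_v\cS(0,\cdot)\Vert<1$, solving by a Neumann series, and identifying the solution with $w$ by uniqueness of the heat equation; this absorption/fixed-point step (and the extension from $T^*$ to arbitrary $T$) is entirely absent from your argument.

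Second, the claim that $\int_{-1}^z\divergence_H(u\cdot\nabla v+\nabla_H p)\in\IE_0(T)$ ``by Lemmas~\ref{le_rhs_e0_easy} and \ref{le_rhs_e0_diffi}'' is not covered by those lemmas: they estimate $v_1\partial v_2$ and $w_1\partial_z v_2$, not their horizontal divergences, and $\divergence_H\nabla_H p=\Delta_H p$ is not reached by a Helmholtz-type estimate in the spirit of Lemma~\ref{le_per_ex}, which only controls first derivatives of the pressure by $\Vert u\cdot\nabla v\Vert_{L^q}$. The paper instead eliminates the pressure explicitly, using $\divergence_H\overline{v}=0$ to get $2\Delta_H p=-\divergence_H\int_{-1}^1 u\cdot\nabla v$, rewrites $f=\tfrac12\int_z\divergence_H\divergence (u\otimes v)$, integrates by parts in $z$ to turn the terms where two derivatives fall on one factor into boundary terms, and then proves new anisotropic embeddings $\IE_1\hookrightarrow L^{2p}(H^{1,2q}_{xy}L^2_z\cap L^{2q}_{xy}H^{1,2}_z)$---this is precisely where the second branch of Assumption (A), $4/3q+2/3p\le1$ for $q\le2$, is used. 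None of this is supplied by the estimates you cite, so as written your proof of $f\in\IE_0(T)$ (and hence of $w\in\IE_1(T)$) does not go through.
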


\begin{proof}
It was shown in \cite[Theorem 3.3c]{GigaGriesHieberHusseinKashiwabara2017} that the primitive equations admit a unique solution $v\in\IE _1^v(T)$, which satisfies in addition 
$v\in C^\infty ((0,T),C^\infty (\overline{\Omega} )^2)$ and hence $w\in C^\infty ((0,T),C^\infty (\overline{\Omega} ))$ for any $T>0$. It remains to show that $w$ belongs to the maximal regularity class $\IE _1(T^*)$ for some $T^*>0$. 

Applying 
$\int _{-1}^z\divergence _H (\cdot)$ to (\ref{eq_PE}) yields  
\[
\partial _t w-\Delta w=f(v,w)\text{ in }(0 ,\infty )\times \Omega ,
\] 
where $f(v,w)=-\int _{-1}^z\divergence _H\left (\nabla _Hp+u\cdot\nabla  v\right )$. Using $\divergence _H\overline{v} =0$, for $z=1$ we obtain 
$2\Delta _H p=-\divergence _H\int_{-1}^1u\cdot\nabla v$ and thus 
\[
f(v,w)=\frac{1}{2}\int _{z}\divergence _Hu\cdot\nabla  v=\frac{1}{2}\int _{z}\divergence _H\divergence u\otimes v,
\]
where $\int _{z}:=\int_z^1-\int_{-1}^z+z\int_{-1}^1$. Observe that 
\[ 
\divergence _H\divergence u\otimes v=\partial _z (w\divergence _Hv+v\cdot \nabla _Hw)+(\divergence _Hv)^2+2v\cdot\nabla_H\divergence_Hv+\nabla_Hv\cdot(\nabla _Hv)^T.
\]
Hence, $f(v,w)=:f_1(v,w)+f_2(v)+f_3(v,w)$ with 
\[
f_1=(w\divergence _Hv-v\cdot \nabla_Hw)\vert_z^1,\quad f_2=\frac{1}{2}\int _{z}\left (\nabla _Hv\cdot (\nabla _Hv)^T+(\divergence _Hv)^2\right ),\quad f_3=\int _{z}\partial _zv\cdot \nabla_Hw.
\]
Here we used the fact that $\int_{z}v\cdot\nabla_H\divergence_Hv=-2(v\cdot\nabla_Hw)\vert _z^1+\int _z\partial _zv\cdot\nabla _Hw $, which follows by integration by parts. By Lemma~\ref{le_rhs_e0_easy} we obtain  $\Vert f_1\Vert _{\IE_0(T)}\le C\Vert v\Vert_{\IE_1(T)}\Vert w\Vert_{\IE_1(T)}$ and moreover 
\begin{align*}
\Vert f_2\Vert _{L^p(L^q)}\le &\ C\Vert \partial _zf_2\Vert _{L^p(L_{xy}^qL^1_z)}\le C\Vert v\Vert _{L^{2p}(H^{1,2q}_{xy}L_z^2)}^2,\\
\Vert f_3\Vert _{L^p(L^q)}\le &\ C\Vert \partial _zf_3\Vert _{L^p(L_{xy}^qL^1_z)}\le C\Vert v\Vert _{L^{2p}(L^{2q}_{xy}H_z^{1,2})}\Vert w\Vert _{L^{2p}(H^{1,2q}_{xy}L_z^2)}.
\end{align*}
For $1\ge 1/p+1/q$ we find by the Mixed Derivative Theorem and Sobolev's embeddings 
\begin{align*}
\IE_1\hookrightarrow H^{1/2p,p}(H^{2-1/p,q})\hookrightarrow H^{1/2p,p}(H^{1+1/q,q}_{xy}L^q_z\cap H^{1/q,q}_{xy}H^{1,q}_z)\hookrightarrow L^{2p}(H^{1,2q}_{xy}L^q_z\cap L^{2q}_{xy}H^{1,q}_z).
\end{align*}
If additionally $q\ge 2$, the above embedding implies $\IE_1\hookrightarrow L^{2p}(H^{1,2q}_{xy}L^2_z\cap L^{2q}_{xy}H^{1,2}_z).$
Similarly, for $q<2$ and $1\ge 4/3q+2/3p$ we find
\[
\IE_1\hookrightarrow H^{\frac{1}{2p},p}\!\left (H^{2-\frac{1}{p},q}\right )\hookrightarrow H^{\frac{1}{2p},p}\!\left (\! H^{1+\frac{1}{q},q}_{xy}H^{\frac{1}{q}-\frac{1}{2},q}_z\cap 
H^{\frac{1}{q},q}_{xy}H^{1+\frac{1}{q}-\frac{1}{2},q}_z\!\right)\hookrightarrow L^{2p}\!\left (H^{1,2q}_{xy}L^2_z\cap L^{2q}_{xy}H^{1,2}_z\right )\!.
\]
The above embeddings imply 
\begin{align*}
\Vert f_2(v)\Vert _{\IE_0(T)}\le C\Vert v\Vert ^2_{\IE_1(T)} \hbox{ and }\Vert f_1(v,w)\Vert _{\IE_0(T)}+\Vert f_3(v,w)\Vert _{\IE_0(T)}\le C\Vert v\Vert _{\IE_1(T)}\Vert w\Vert _{\IE_1(T)}.
\end{align*}
In particular $f_2(v)\in \IE_0(T)$. By maximal regularity there exists a solution operator $\cS :X_\gamma \times \IE_0(T)\rightarrow \IE_1(T)$ such that $u:=\cS (w_0,f_2(v))$ satisfies 
\[
\partial _tu-\Delta u=f_2\text{ in }(0,\infty )\times \Omega ,\quad u(0)=w_0.
\]
Setting now $B_v=-f_1(v,\cdot )-f_3(v,\cdot )\in \cL (\IE_1(T),\IE_0(T))$, i.e. it is a bounded linear operator from $\IE_1(T)$ to $\IE_0(T)$,  and adding $B_vu$ on both sides we see that 
\[ 
\partial _tu-\Delta u+B_vu=f_2+B_vu=[\II +B_v\cS (0,\cdot )]f_2+B_v\cS(w_0,0)\text{ in }(0,\infty )\times \Omega .
\]
Next, note that $\Vert \cS (0,\cdot )\Vert _{\cL (\IE_0(T),\IE_1(T))}$ can be bounded uniformly for $T\le 1$. Moreover $\Vert B_v\Vert _{\cL (\IE _1(T),\IE_0(T))}\le C\Vert v\Vert _{\IE_1(T)}$ by the previous 
estimates on $f_1$ and $f_3$. Choosing now $T^*$ small enough such that $\Vert v\Vert _{\IE_1(T^*)}<\left (C\Vert \cS (0,\cdot )\Vert _{\cL (\IE_0(T^*),\IE_1(T^*))}\right )^{-1},$ we see that 
$\Vert B_v\cS (0,\cdot )\Vert _{\cL (\IE_0(T^*))}<1$. A Neumann series argument yields $[\II+B_v\cS(0,\cdot )]^{-1}\in \cL(\IE_0(T^*))$, and thus 
$$
\tilde{u}:=\cS \big(w_0,[\II+B_v\cS(0,\cdot )]^{-1}(f_2-B_v\cS(w_0,0))\big)\in \IE_1(T^*)
$$ solves 
\begin{align*}
&\partial _t\tilde{u}-\Delta \tilde{u}+B_v\tilde{u}=\ [\II+B_v\cS(0,\cdot )]^{-1}(f_2-B_v\cS(w_0,0))+B_v\tilde{u}\\
=&\ [\II+B_v\cS(0,\cdot )]^{-1}(f_2-B_v\cS(w_0,0))+B_v\cS\big (0,[\II+B_v\cS(0,\cdot )]^{-1}(f_2-B_v\cS(w_0,0))\big )+B_v\cS (w_0,0)\\
=&\ [\II +B_v\cS (0,\cdot )][\II+B_v\cS(0,\cdot )]^{-1}(f_2-B_v\cS(w_0,0))+B_v\cS (w_0,0)=f_2
\end{align*} 
in $(0,T^* )\times \Omega $ with $\tilde{u}(0)=w_0$. Since  $f(v,\cdot)=f_2(v)-B_v$ and since the heat equation is uniquely solvable, we finally obtain 
$w=\tilde{u}\in \IE_1(T^*)$. Summing up, $w\in \IE _1(T)$ for any $T>0$.
\end{proof}

\vn
\begin{corollary}\label{coro_q_ineq}
Let $\mathcal{T}>0$ and $p,q\in (1,\infty )$ such that $1/p+1/q\le 1$. Let $(V_\varepsilon,W_\varepsilon )\in \IE^u_1(\mathcal{T})$ denote the solution of equation  \eqref{eq_diff-eq}
for some $u=(v,w)\in \IE_1(\mathcal{T})$ and initial data $U_0\in X_\gamma$. Then $X_\varepsilon (T)=\Vert (V_\varepsilon ,\varepsilon W_\varepsilon )\Vert _{\IE_1(T)}$ and for any 
$\eta \in \left [0,1-1/p-1/q\right ]$ there exists a constant $C>0$, independent of $\varepsilon$, such that  
\[ 
X_\varepsilon (T)\le CT^\eta \left [X_\varepsilon (T)\Vert u\Vert _{\IE_1(T)}+X_\varepsilon ^2(T)\right ]+\varepsilon C\left [\Vert u\Vert _{\IE_1(T)}+T^\eta \Vert u\Vert _{\IE_1(T)}^2\right ]+C\Vert U_0\Vert _{X_\gamma},
\]
for all $T\in [0,\mathcal{T}]$.
\end{corollary}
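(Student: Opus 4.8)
The plan is to apply the maximal regularity estimate of Proposition~\ref{prop_mr_estimate} to the difference equation \eqref{eq_diff-eq} and then bound the resulting right-hand side term by term using the nonlinear estimates of Section~4.1. More precisely, since $(V_\varepsilon, W_\varepsilon)$ solves \eqref{eq_diff-eq} with forcing $(F_H, \varepsilon F_z)$, zero initial data, and the pressure $P_\varepsilon$ coupled through $\nabla_\varepsilon$, Proposition~\ref{prop_mr_estimate} applied to $U = (V_\varepsilon, \varepsilon W_\varepsilon)$ gives
\[
X_\varepsilon(T) = \Vert (V_\varepsilon, \varepsilon W_\varepsilon)\Vert_{\IE_1(T)} \le C\Vert (F_H, \varepsilon F_z)\Vert_{\IE_0(T)} + C_T\Vert U_0\Vert_{X_\gamma}.
\]
Since $U_0 = 0$ here, the last term drops, but I will keep a generic $\Vert U_0\Vert_{X_\gamma}$ term to match the statement (as stated for general initial data $U_0 \in X_\gamma$). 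So the heart of the proof is estimating $\Vert F_H\Vert_{\IE_0(T)}$ and $\varepsilon\Vert F_z\Vert_{\IE_0(T)}$.

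For $F_H = -U_\varepsilon\cdot\nabla v - u\cdot\nabla V_\varepsilon - U_\varepsilon\cdot\nabla V_\varepsilon$, I would split each bilinear term according to whether the third (vertical) component of the velocity multiplies a $\partial_z$ derivative or not. Terms of the form (horizontal velocity)$\,\cdot\,$(horizontal gradient) and $w_\bullet \partial_z(\cdot)$ with $w_\bullet$ replaced by its representation $\int_{-1}^z \divergence_H(\cdot)$ fall under Lemma~\ref{le_rhs_e0_easy} and Lemma~\ref{le_rhs_e0_diffi} respectively; both produce a factor $T^\eta$ for $\eta \in [0, 1-1/p-1/q]$ (this is the binding range, smaller than the one in Lemma~\ref{le_rhs_e0_easy}). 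Writing $W_\varepsilon = \varepsilon^{-1}(\varepsilon W_\varepsilon)$ in the terms where $W_\varepsilon$ appears, one sees that $\Vert U_\varepsilon \cdot \nabla v\Vert_{\IE_0} \le CT^\eta X_\varepsilon(T)\Vert u\Vert_{\IE_1(T)}$, $\Vert u\cdot\nabla V_\varepsilon\Vert_{\IE_0} \le CT^\eta \Vert u\Vert_{\IE_1(T)} X_\varepsilon(T)$, and $\Vert U_\varepsilon\cdot\nabla V_\varepsilon\Vert_{\IE_0} \le CT^\eta X_\varepsilon^2(T)$ — the key bookkeeping point being that every occurrence of a vertical velocity $W_\varepsilon$ or vertical derivative can be matched with exactly one $\varepsilon$ from the scaling, so no negative powers of $\varepsilon$ survive. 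This yields the first bracket $CT^\eta[X_\varepsilon(T)\Vert u\Vert_{\IE_1(T)} + X_\varepsilon^2(T)]$ in the claimed inequality.

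The term $\varepsilon F_z$ contains the same three bilinear pieces (now with $\nabla w$, $\nabla W_\varepsilon$, etc.), which by the same reasoning contribute $\varepsilon$ times terms of the form $T^\eta X_\varepsilon(T)\Vert u\Vert_{\IE_1}$, $T^\eta X_\varepsilon^2(T)$, etc.; but crucially the prefactor $\varepsilon$ together with one factor of $\varepsilon W_\varepsilon$ (or $\varepsilon w$ — here one uses that $w \in \IE_1(T)$ by Proposition~\ref{prop_w_in_E1}) means these can be absorbed into the same first bracket up to the overall $\varepsilon$. In addition $\varepsilon F_z$ carries the \emph{linear} inhomogeneity $\varepsilon(-\partial_t w - u\cdot\nabla w + \Delta w)$ coming from the fact that $w$ does not solve the $w_\varepsilon$-equation. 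Here $\Vert \partial_t w\Vert_{\IE_0} + \Vert \Delta w\Vert_{\IE_0} \le C\Vert w\Vert_{\IE_1(T)} \le C\Vert u\Vert_{\IE_1(T)}$ directly, while the quadratic term $u\cdot\nabla w$ is handled by Lemmas~\ref{le_rhs_e0_easy}/\ref{le_rhs_e0_diffi} to give $T^\eta\Vert u\Vert_{\IE_1(T)}^2$; multiplying by $\varepsilon$ produces exactly the second bracket $\varepsilon C[\Vert u\Vert_{\IE_1(T)} + T^\eta\Vert u\Vert_{\IE_1(T)}^2]$. Collecting all contributions and folding the $C_T\Vert U_0\Vert_{X_\gamma}$ term into $C\Vert U_0\Vert_{X_\gamma}$ (permissible since $T \le \mathcal{T}$ is bounded) gives the stated inequality.

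The main obstacle — really the only subtle point — is the $\varepsilon$-bookkeeping: one must verify that in \emph{every} nonlinear term arising from $F_H$ and $\varepsilon F_z$, each vertical velocity factor $W_\varepsilon$ (which one wants to replace by $\varepsilon^{-1}(\varepsilon W_\varepsilon)$ to invoke $X_\varepsilon(T)$) is compensated by a factor $\varepsilon$, so that the final estimate has no negative powers of $\varepsilon$. This is exactly why the difference equation is written with $\varepsilon W_\varepsilon$ rather than $W_\varepsilon$ and why the scaled Navier--Stokes system was set up with the $\varepsilon^{-1}$ in front of $\partial_z p_\varepsilon$; tracking it carefully through $U_\varepsilon\cdot\nabla(\cdot) = V_\varepsilon\cdot\nabla_H(\cdot) + W_\varepsilon\partial_z(\cdot)$ and through the $w$-dependent linear remainder is the part that requires care, though no new ideas beyond the three nonlinear lemmas and the representation $W_\varepsilon = \int_{-1}^z \divergence_H V_\varepsilon$.
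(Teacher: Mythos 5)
Your overall route is the paper's: apply Proposition~\ref{prop_mr_estimate} to \eqref{eq_diff-eq} with right-hand side $(F_H,\varepsilon F_z)$ and bound it in $\IE_0(T)$ via Lemmas~\ref{le_rhs_e0_easy} and~\ref{le_rhs_e0_diffi}, and the term-by-term bounds you assert are the correct ones. However, the ``key bookkeeping point'' as you state it --- write $W_\varepsilon=\varepsilon^{-1}(\varepsilon W_\varepsilon)$ wherever $W_\varepsilon$ appears and match every vertical velocity or vertical derivative with one $\varepsilon$ coming from the scaling --- is not what makes the argument work, and taken literally it fails. In $F_H$ there is no factor $\varepsilon$ at all, so the terms $W_\varepsilon\partial_z v$, $w\,\partial_z V_\varepsilon$ and $W_\varepsilon\partial_z V_\varepsilon$ would pick up an uncontrolled $\varepsilon^{-1}$ under that substitution. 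These terms are handled \emph{exclusively} by the anisotropic estimate of Lemma~\ref{le_rhs_e0_diffi} together with the representations $W_\varepsilon=-\int_{-1}^z\divergence_H V_\varepsilon$ and $w=-\int_{-1}^z\divergence_H v$, which bound them by $CT^\eta\Vert V_\varepsilon\Vert_{\IE_1(T)}\Vert v\Vert_{\IE_1(T)}$ and the like, with no $\varepsilon$ appearing anywhere; you do say this in the sentence immediately before, so your two justifications contradict each other, and only the first is viable.

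Even in $\varepsilon F_z$ the literal matching claim is false: the term $\varepsilon W_\varepsilon\partial_z W_\varepsilon$ contains two occurrences of $W_\varepsilon$ but only one $\varepsilon$. What is actually needed (and what the paper does) is to use the divergence-free relations $\partial_z w=-\divergence_H v$ and $\partial_z W_\varepsilon=-\divergence_H V_\varepsilon$ to rewrite $\varepsilon F_z$ so that no vertical derivative of $w$ or $W_\varepsilon$ survives and every remaining $W_\varepsilon$ occurs precisely in the combination $\varepsilon W_\varepsilon$; after this rewriting Lemma~\ref{le_rhs_e0_easy} alone yields
\begin{equation*}
\Vert \varepsilon F_z\Vert _{\IE _0(T)}\le  CT^\eta \left [\Vert V_\varepsilon \Vert _{\IE _1(T)}\Vert w \Vert _{\IE _1(T)}+\Vert \varepsilon W_\varepsilon \Vert _{\IE _1(T)}\left (\Vert V_\varepsilon \Vert _{\IE _1(T)}+\Vert v \Vert _{\IE _1(T)}\right )\right ]+C\varepsilon \left [\Vert w \Vert _{\IE _1(T)}+T^\eta \Vert w \Vert _{\IE _1(T)}^2\right ],
\end{equation*}
while in $F_H$ Lemma~\ref{le_rhs_e0_diffi} carries the whole weight. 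With this correction of the $\varepsilon$-bookkeeping your argument coincides with the paper's proof; the remaining points (keeping the $C\Vert U_0\Vert_{X_\gamma}$ term, absorbing $C_T$ for $T\le\mathcal{T}$, and the binding range $\eta\in[0,1-1/p-1/q]$ coming from Lemma~\ref{le_rhs_e0_diffi}) are fine as you wrote them.
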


\begin{proof}
Since  
\[
F_H=-V_\varepsilon \cdot\nabla _Hv-W_\varepsilon\partial _zv-v\cdot\nabla _HV_\varepsilon-w\partial _zV_\varepsilon-V_\varepsilon\cdot\nabla _HV_\varepsilon-W_\varepsilon\partial _zV_\varepsilon,
\]
we obtain with the help of Lemma~\ref{le_rhs_e0_easy} and \ref{le_rhs_e0_diffi}
\begin{equation}\label{eq_rhs_e0_fh}
\Vert F_H\Vert _{\IE _0(T)}\le CT^\eta \Vert V_\varepsilon\Vert _{\IE _1(T)}(\Vert V_\varepsilon\Vert _{\IE _1(T)}+\Vert (v,w)\Vert _{\IE _1(T)}).
\end{equation}
Similarly, since
\[
\varepsilon F_z=\varepsilon (-V_\varepsilon \cdot\nabla _Hw-w\divergence _HV_\varepsilon )-\varepsilon W_\varepsilon\divergence _H (v+V_\varepsilon )-(v+V_\varepsilon )\cdot
\nabla _H\varepsilon W_\varepsilon-\varepsilon (\partial _tw-u\cdot\nabla w+\Delta w),
\]
Lemma~\ref{le_rhs_e0_easy} yields 
\begin{align*}
 \Vert \varepsilon F_z\Vert _{\IE _0(T)}
\le  CT^\eta [\Vert V_\varepsilon \Vert _{\IE _1(T)}\Vert w \Vert _{\IE _1(T)}+\Vert \varepsilon W_\varepsilon \Vert _{\IE _1(T)}(\Vert V_\varepsilon 
\Vert _{\IE _1(T)}+\Vert v \Vert _{\IE _1(T)})]+C\varepsilon [\Vert w \Vert _{\IE _1(T)}+T^\eta \Vert w \Vert _{\IE _1(T)}^2].
\end{align*}
Combining this estimate with \eqref{eq_rhs_e0_fh}, Proposition~\ref{prop_mr_estimate} yields the assertion. %
\end{proof}

\subsection{Proof of the main result}

\begin{proof}[{Proof of Theorem~\ref{mthm_MR}}]
Fix $\mathcal{T}>0$ and denote by  $u$ the solution of equation (\ref{eq_PE}). Proposition~\ref{prop_w_in_E1} yields 
$u\in \IE_1(\mathcal{T})$. We now show that 
$$
X_\varepsilon (T):=\Vert V_\varepsilon ,\varepsilon W_\varepsilon \Vert _{\IE_1(T)}\le \varepsilon C(\Vert (v,w) 
\Vert _{\IE _1(\mathcal{\cdot})},\mathcal{T},p,q)
$$ 
for all $T\in [0,\mathcal{T}]$ and $\varepsilon >0$ small enough. By the uniform continuity of 
$T\mapsto \Vert u\Vert _{\IE_1 (T)}$ on $[0,\mathcal{T}]$ there is a $T^*\in [0,\mathcal{T}]$ such that 
$\Vert u\Vert _{\IE_1 (T+T^*)}^p-\Vert u\Vert _{\IE_1 (T)}^p\le (2C\mathcal{T}^\eta )^{-p}$ for all $T\in [0,\mathcal{T}-T^*]$ and  where  $C$ denotes the constant given in 
Corollary~\ref{coro_q_ineq}. The latter  with $U_0=0$ implies 
\begin{align}\label{eq:qudratic}
CX^2_\varepsilon(T)-\tfrac{1}{2} X_\varepsilon (T)+\varepsilon \ge 0,\quad T\in [0,T^*].
\end{align}
Observe  that $(X_\varepsilon )^p:t\mapsto\left (\int_0^t\ldots\right )$ is continuous in $[0,T]$ and $X_\varepsilon (0)=0$. Thus, for $\varepsilon <(16C)^{-1}$, we may solve this quadratic inequality 
and  obtain $X_\varepsilon \le 2\varepsilon $ on $[0,T^*]$.

Note that inequality~\eqref{eq:qudratic} holds indeed on a time interval independent of $\varepsilon$. More specifically, if one replaces $T^*$ by  $T_{\varepsilon}<T^*$, where $T_{\varepsilon}$ is the maximal existence time of \eqref{NSe} with initial data $u_{0}$, then it holds similarly as above that   
$X_\varepsilon \le 2\varepsilon $ on $[0,{T}_{\varepsilon}]$ which implies a contradiction to the maximality of the existence time.   

Assume there is some $m\in \IN $ such that $mT^*<\mathcal{T}$ and $X_\varepsilon \le \varepsilon 2K_m$ in $[0,mT^*]$,  where $K_1=1$ and 
$K_m=2^{1/p}\left [\left (2Cc_{mT^*}+1\right )K_{m-1}+1\right ]$ and  $c_{T}$ denotes the embedding constant of $\IE _1(T)\hookrightarrow L^\infty (0,T;X_\gamma )$. Let 
$(\tilde{V}_\varepsilon ,\varepsilon\tilde{W}_\varepsilon )(T)=(V_\varepsilon ,\varepsilon W_\varepsilon )(T+mT^*)$ be the unique solution of problem (\ref{eq_diff-eq}) with respect to 
$\tilde{u}(T)=u(T+mT^*)$ and  initial data $U_0=({V}_\varepsilon ,\varepsilon {W}_\varepsilon )(mT^*)$. Setting 
$$\tilde{X}_\varepsilon ^p (T):=\Vert (\tilde{V}_\varepsilon ,\varepsilon \tilde{W}_\varepsilon )\Vert _{\IE _1(T)}^p={X}_\varepsilon ^p(T+mT^*)-{X}_\varepsilon ^p(mT^*),$$ 
Corollary~\ref{coro_q_ineq} and the argument about the $\varepsilon$-independency of the time interval given above  imply 
\[C
\tilde{X}_\varepsilon ^2(T)-\tfrac{1}{2} \tilde{X}_\varepsilon (T)+\varepsilon +C\Vert U_0\Vert _{X_\gamma }\ge 0,\quad T\in [0,\min \{T^*;\mathcal{T}-mT^*\}].
\]
By assumption $\Vert U_0\Vert _{X_\gamma }\le c_{mT^*}{X}_\varepsilon (mT^*)\le c_{mT^*}\varepsilon 2K_m$. Since $\tilde{X}_\varepsilon (0)=0$ and $\tilde{X}_\varepsilon$ is continuous 
in $[0,\min \{T^*;\mathcal{T}-mT^*\}]$, we may solve the quadratic inequality for $\varepsilon <(16C(1+2Cc_{mT^*}K_m))^{-1}$ and obtain 
$\tilde{X}_\varepsilon\le \varepsilon 2(1+2Cc_{mT^*}K_m)$ in $[0,\min \{T^*;\mathcal{T}-mT^*\}]$. Hence, by the assumption on $m$ 
\[
X_\varepsilon ^p(T)\le \left (\varepsilon 2(1+2Cc_{mT^*}K_m)\right )^p+X_\varepsilon ^p(mT^*)\le 2\left [2\varepsilon (1+2Cc_{mT^*}K_m)+\varepsilon 2K_m\right ]^p=(\varepsilon 2K_{m+1})^p,
\]
for all $T\in [mT^*,\min \{(m+1)T^*;\mathcal{T}\}]$. The assumption on  $m$ implies  $X_\varepsilon \le \varepsilon 2K_{m+1}$ in $[0,\min \{(m+1)T^*;\mathcal{T}\}]$. By induction we 
get $X_\varepsilon \le \varepsilon 2K_{M}$ in $[0,\mathcal{T}]$ with $M=\left \lceil \frac{\mathcal{T}}{T^*}\right \rceil $. The proof of Theorem~\ref{mthm_MR} is complete.
\end{proof}

\vn
\begin{remarks} \label{remunif}
a) It is remarkable that in  every induction step we do \emph{not} rely upon the local well-posedness of equation (\ref{eq_diff-eq}). In fact,  the boundedness of the difference and the 
long time well-posedness of the primitive equations as well as the local well-posedness of the Navier-Stokes equations are sufficient for our arguments.  More specifically, let 
$X_\varepsilon \le \varepsilon 2K_{m}$ on $[0,mT^*]$. To construct a solution $(\tilde{V}_\varepsilon ,\varepsilon \tilde{W}_\varepsilon)$ to (\ref{eq_diff-eq}) with inhomogeneous initial data 
$(V_\varepsilon ,\varepsilon W_\varepsilon )(mT^*)$ we reconstruct the solution to the Navier-Stokes equations with initial data $(V_\varepsilon ,\varepsilon W_\varepsilon )(mT^*)+u(mT^*)$, which exists 
locally, from the solution of the primitive equations with initial data $u(mT^*)$. This method ensures local existence of the difference solution 
$(\tilde{V}_\varepsilon ,\varepsilon \tilde{W}_\varepsilon)$. 

On the other hand every solution to (\ref{eq_diff-eq}) with initial data $(V_\varepsilon ,\varepsilon W_\varepsilon )(mT^*)$ adds up with the solution of the primitive equations with initial data $u(mT^*)$ 
to a solution of the Navier-Stokes equation with initial data $(V_\varepsilon ,\varepsilon W_\varepsilon )(mT^*)+u(mT^*)$.

b) Given $u\in \IE _1(\infty )$, we may adjust the above proof in such a way that $X_\varepsilon (T) \le \varepsilon C$ \emph{uniformly} for all $T\in (0,\infty ]$. More precisely, 
there exist \emph{finitely} many $0=T_0< T_1< \ldots < T_m=\infty $ such that $\Vert u\Vert _{\IE_1 (T_i)}^p-\Vert u\Vert _{\IE_1 (T_{i-1})}^p\le (2C)^{-p}$ for $i=1,\ldots m$. Proceeding as above, while 
using Corollary~\ref{coro_q_ineq}, $\eta =0$ yields $X_\varepsilon (T) \le \varepsilon C$ for all $T\in [0,\infty )$, where $C$ is \emph{independent} of $T$. Taking the limit yields the assertion 
for $T=\infty$.
\end{remarks}


\begin{bibdiv}
\begin{biblist}

\bib{ArezadGuillen2001}{article}{
 
  Author                   = {Az\'{e}rad, P.},
  Author				   = {Guill\'{e}n, F.},
  Title                    = {Mathematical Justification of the Hydrostatic Approximation in the Primitive Equations of Geophysical Fluid Dynamics},
  Journal                  = {SIAM J. Math. Anal.},
  Year                     = {2001},
  Number                   = {4},
  Pages                    = {847-859},
%
%
%
}

\bib{BessonLaydi1992}{article}{
	Title                    = {Some estimates for the anisotropic Navier-Stokes equations and for the hydrostatic approximation},
	Author                   = {{Besson, O.} and {Laydi, M. R.}},
	Journal                  = {ESAIM: M2AN},
	Year                     = {1992},
	Number                   = {7},
	Pages                    = {855-865},
	%
}

\bib{CaoTiti2007}{article}{
  Title                    = {Global Well-Posedness of the Three-Dimensional Viscous Primitive Equations of Large Scale Ocean and Atmosphere Dynamics},
  Author                   = {Cao, C.},
  Author				   = {Titi, E.~S.},
  Journal                  = {Annals of Mathematics},
  Year                     = {2007},
  Number                   = {1},
  Pages                    = {245--267},
%
}

\bib{CaoLiTiti2015}{article}{
  Title                    = {Global Well-Posedness of the Three-Dimensional Primitive Equations with Only Horizontal Viscosity and Diffusion},
  Author                   = {Cao, C.},
  Author				   = {Li, J.},
  Author				   = {Titi, E.~S.},
  Journal                  = {Communications on Pure and Applied Mathematics},
  Number                   = {8},
  Pages                    = {1492-1531},
  Year					   = {2015}
%
}

\bib{GigaGriesHieberHusseinKashiwabara2017}{article}{
  Title                    = {Analyticity of solutions to the primitive equations},

  Author                   = {Giga, Y.},
  Author				   = {Gries, M.},
  Author				   = {Hieber, M.},
  Author				   = {Hussein, A.},
  Author				   = {Kashiwabara, T.},
  Year                     = {2017},

  Eprint                   = {arXiv:1710.04860}
}

\bibitem{GigaGriesHusseinHieberKashiwabara2016}
Y.~Giga, M.~Gries, A.~Hussein, M.~Hieber and T.~Kashiwabara.
\newblock Bounded $H^{\infty}$-Calculus for the hydrostatic Stokes operator on $L^p$-spaces and applications.             
\newblock {\em Proc. Amer. Math. Soc.}, 145(9):3865--3876, 2017.
\newblock \doi{10.1090/proc/13676}

\bibitem{NeumannNeumann}
Y.~Giga, M.~Gries, M.~Hieber, A. Hussein and T.~Kashiwabara.
\newblock The primitive equations in the scaling invariant space $L^{\infty}(L^1)$.             
\newblock Submitted, Preprint \href{https://arxiv.org/abs/1710.04434}{arXiv:1710.04434}, 2017.

\bibitem{DirichletNeumann}
Y.~Giga, M.~Gries, M.~Hieber, A. Hussein and T.~Kashiwabara.
\newblock  The hydrostatic Stokes semigroup and well-posedness of the primitive equations on spaces of bounded functions.             
\newblock Submitted, Preprint \href{https://arxiv.org/abs/1802.02383}{arXiv:1802.02383}, 2018.

\bib{HECK20093739}{article}{
  Title                    = {Stability of plane Couette flows with respect to small periodic perturbations},
  Author                   = {Heck, H.},
  Author				   = {Kim, H.},
  Author				   = {Kozono, H.},
  Journal                  = {Nonlinear Analysis},
  Year                     = {2009},
  Number                   = {9},
  Pages                    = {3739 - 3758},
%
}

\bib{Hieber2016}{article}{
  Title                    = {Global strong well-posedness of the three dimensional primitive equations in $L^p$-spaces},
  Author                   = {Hieber, M.},
  Author				   = {Kashiwabara, T.},
  Journal                  = {Archive Rational Mech. Anal.},
  Year                     = {2016},

  Number                   = {3},
  Pages                    = {1077--1115},
%
}

\bibitem{HieberKashiwabaraHussein2016}
M.~Hieber, T.~Kashiwabara and A.~Hussein.
\newblock Global strong {$L^p$} well-posedness of the 3{D} primitive equations with heat and salinity diffusion 
 \newblock {\em J. Differential Equations}, 261(12): 6950--6981, 2016.       
\newblock \doi{10.1016/j.jde.2016.09.010}

\bib{LiTiti2017}{article}{
  Title                    = {The primitive equations as the small aspect ratio limit of the Navier-Stokes equations: rigorous justification of the hydrostatic approximation},

  Author                   = {Li, J.},
  Author				   = {Titi, E.~S.},
  Year                     = {2017},

  Eprint                   = {arXiv:1706.08885}
}

\bib{Li2016}{book}{
  Title                    = {Recent advances concerning certain class of geophysical flows},
  Author                   = {Li, J.},
  Author				   = {Titi, E.~S.},
  Editor                   = {Giga, Y.},
  Editor				   = {Novotny, A.},
  Publisher                = {Springer},
  Year                     = {2016},

  Address                  = {Cham},

  Booktitle                = {Handbook of Mathematical Analysis in Mechanics of Viscous Fluids},
  Doi                      = {10.1007/978-3-319-10151-4_22-1},
  Pages                    = {1--39},
}

\bib{Lions1}{article}{
  Title                    = {New formulation of the primitive equations of atmosphere and applications},
  Author                   = {Lions, J.~L.},
  Author				   = {Temam, R.},
  Author				   = {Wang, S.},
  Year                     = {1992},

  Pages                    = {237},
  Volume                   = {5},

  Booktitle                = {Nonlinearity}
}

\bib{LionsTemanWang1992}{article}{
  Title                    = {On the equations of the large-scale ocean},
  Author                   = {Lions, J.~L.},
  Author				   = {Temam, R.},
  Author				   = {Wang, S.},
  Journal                  = {Nonlinearity},
  Year                     = {1992},
  Number                   = {5},
  Pages                    = {1007},
%
}

\bib{Lions:1993}{article}{
  Title                    = {Models for the Coupled Atmosphere and Ocean},
  Author                   = {Lions, J.~L.},
  Author				   = {Temam, R.},
  Author				   = {Wang, S.},
  Journal                  = {Comput. Mech. Adv.},
  Year                     = {1993},

  Number                   = {1},
  Pages                    = {3--4},
%
}

\bib{Pedlosky1979}{book}{
author = {Pedlosky, J.},
title = { Geophysical fluid dynamics},
publisher = { Springer, New York },
year = { 1979 },
type = { Book },
}

\bib{PruessSimonett}{book}{
  Title                    = {Moving Interfaces and Quasilinear Parabolic Evolution Equations},
  Author                   = {Pr\"uss, J.},
  Author				   = {Simonett, G.},
  Year                     = {2016},
  Volume                   = {105},

}

%
%

\bibitem{Triebel}
H.~Triebel.
\newblock {\em Theory of Function Spaces}.
\newblock Springer, Basel, 2010.

%

\end{biblist}
\end{bibdiv}

\end{document}